\newcommand{\R}{\mathbb R}
\newtheorem{theorem}{Theorem}[section]
\newtheorem{lemma}[theorem]{Lemma}
\newtheorem{proposition}[theorem]{Proposition}
\newenvironment{proof}[1][Proof]{\textbf{#1.} }{\hfill\rule{0.5em}{0.5em}}
{\catcode`\@=11\global\let\AddToReset=\@addtoreset
	\AddToReset{equation}{section}
	
	\AddToReset{theorem}{section}

	
\begin{document}
		\title{A new proof of nonlinear Landau damping for the 3D Vlasov-Poisson system near Poisson equilibrium}

		\author{ {\bf Quoc-Hung Nguyen\thanks{E-mail address: qhnguyen@amss.ac.cn, Academy of Mathematics and Systems Science,
					Chinese Academy of Sciences,
					Beijing 100190, PR China} },
			{\bf Dongyi Wei\thanks{E-mail address: jnwdyi@pku.edu.cn, School of Mathematical Sciences, Peking University, Beijing 100871,  PR China}
}, 	{\bf Zhifei Zhang\thanks{E-mail address: zfzhang@math.pku.edu.cn, School of Mathematical Sciences, Peking University, Beijing 100871,  PR China}}
		}

		\date{}  
		\maketitle
				\begin{abstract}This paper investigates nonlinear Landau damping in the 3D Vlasov-Poisson (VP) system. We study the asymptotic stability of the Poisson equilibrium $\mu(v)=\frac{1}{\pi^2(1+|v|^2)^2}$
				under small perturbations. Building on the foundational work of Ionescu, Pausader, Wang, and Widmayer \cite{AIonescu2022}, we provide a streamlined proof of nonlinear Landau damping for the 3D unscreened VP system. Our analysis leverages sharp decay estimates, novel decomposition techniques to demonstrate the stabilization of the particle distribution and the decay of electric field. These results reveal the free transport-like behavior for the perturbed density $\rho(t,x)$, and enhance the understanding of Landau damping in an unconfined setting near stable equilibria.
			\end{abstract}
		\section{Introduction}
		This paper investigates nonlinear Landau damping for the 3D Vlasov-Poisson (VP) system. Specifically, we study the following system
		\begin{equation}\label{toymodela}
			\begin{cases}
				\partial_t \mathbf{f} + v\cdot \nabla_x  \mathbf{f}+E\cdot\nabla_v  \mathbf{f}=0,\\
				E=-\nabla_xU,  ~  -\Delta U ={\rho^*}-1,~~ {\rho^*}(t,x)=\int_{\mathbb{R}^3} \mathbf{f}(t,x,v)dv,\\
			\end{cases}
		\end{equation}
		in the whole space $x\in\mathbb{R}^3$, $v\in\mathbb{R}^3$. Here $\mathbf{f}=\mathbf{f}(t,x,v)\ge 0$ is the probability distribution of charged particles in plasma, $\rho^*(t,x)$ is the electric charge density, and  $E=E(t,x)$ is the electric field. The initial data $f_0(x,v)$ is close to the Poisson equilibrium  $\mu(v)=\frac{1}{\pi^2(1+|v|^2)^2}$.
	
	The dynamics and stability of the VP system have been extensively studied, with research primarily addressing global existence, regularity, and long-time behavior of the solution \cite{arsenev,  EHorst1982, CBardos1985,FBouchut1991,KPfaffelmoser1992, JSchaeffer1991,EHorst1993,RTGlassey1996,LionsPL1991,HJHwang2011,SHCHoi2011,JSmulevici2016,Xwang2018,Alonescwau2020,FlyOuPau,GriffinIacobelli2021a,GriffinIacobelli2021b}. 
Here we focus on the asymptotic stability of solutions $f(t,x,v)$ near the Poisson equilibrium. We write 
	$$\mathbf{f}(t,x,v)=\mu(v)+f(t,x,v),$$
	where $\mu(v)$ is the Poisson equilibrium and $\mathbf{f}(t=0,x,v)$ closes to $\mu(v)$.  The perturbed distribution $f(t,x,v)$ satisfies
\begin{equation}\label{eq2}
	\begin{cases}
		\partial_t f + v\cdot \nabla_x f +E\cdot\nabla_v \mu=-E\cdot\nabla_v f,\\
		E=\nabla_x\Delta^{-1}\rho,~~ \rho(t,x)=\int_{\mathbb{R}^d}f(t,x,v)dv,\\
		
		f|_{t=0}=f_0.
	\end{cases}
\end{equation}

Landau damping describes the stabilization of plasma distribution via phase mixing, where the electric field perturbation decays over time without dissipation. This phenomenon was rigorously established by Mouhot and Villani in the torus setting with analytic or Gevrey regularity \cite{CMouhot2011}. However, Bedrossian demonstrated that finite regularity solutions generally lack the same damping properties \cite{Bedrossiantunis}. Related mechanisms in the fluid dynamics, such as vorticity mixing by shear flows, have been extensively studied \cite{JbedIhes2015,ALonescu2020,Alonescu2020acta,Naderweiren,Chen}.

In the unbounded whole space $\mathbb{R}^d\times\mathbb{R}^d$
for $d\geq 3$, nonlinear Landau damping was established for the screened VP system \cite{JBedrossian2018,HanKwanD2021,HNX1}. Recently, the first author and collaborators proved nonlinear Landau damping for the 2D screened VP system \cite{HNX2}. For the unscreened case, Ionescu, Pausader, Wang, and Widmayer \cite{AIonescu2022} rigorously demonstrated the decay of electric field and stabilization of the particle distribution in $\mathbb{R}^3$ near the Poisson equilibrium. Their key innovation was the decomposition of the electric field into static and oscillatory components with distinct decay rates. However, general case for the unscreened system remains open.

Building on the work \cite{AIonescu2022}, this paper introduces a concise and novel proof of nonlinear Landau damping for the 3D VP system near the Poisson equilibrium. Our approach simplifies the analysis while maintaining its depth and rigor. Roughly speaking, our main results include as follows.

\begin{itemize}
	\item We establish precise bounds for the density $\rho$ showing its stabilization and free transport-like behavior.
	\item
	Streamlined analysis: by leveraging new decomposition techniques, we simplify the derivation of global estimates.
	\item
	Asymptotic stability: we prove the long-time stability of solutions, demonstrating the stabilization of the distribution function $f(t,x,v)$ and the decay of electric field.
\end{itemize}

We state the main theorem, which demonstrates the global existence and asymptotic stability of the perturbed system.

	\begin{theorem} \label{mainthm} Let $0<\kappa_0\ll 1$. Assume that  $\iint f_0(x,v)dxdv=0$.  Then there exists $\epsilon_0>0$ such that if 
		\begin{equation}
			[f_0]=\sum_{j=0,1,2}\sup_{x,v}\langle x,v\rangle^{10}|\nabla_{x,v}^jf_0(x,v)|
			\leq \epsilon_0,
		\end{equation}
		then the VP system \eqref{eq2} has global unique solution $f$ satisfying 
		\begin{align}
			|	\nabla\Delta^{-1}\rho(t,x)|+\langle t,x\rangle	|	\nabla^2\Delta^{-1}\rho(t,x)|+\langle t,x\rangle^{1+\kappa_0}	|	\nabla^3\Delta^{-1}\rho(t,x)|\leq\langle t,x\rangle^{-3+\kappa_0} [f_0],
		\end{align}
		for any $t>0$ and $x\in \mathbb{R}^3$. Moreover, there is a  function $f_\infty\in C^{0,1}_{x,v}$ satisfying 
		\begin{align}\label{po}
		\langle v\rangle^{5}	|f(t,x+tv,v)-f_\infty(x,v)|\lesssim [f_0]\int_{t}^{\infty}\tau \langle \tau,x+(\tau-t)v\rangle^{-3+\kappa_0}d\tau,
		\end{align}
	for any $t>0$ and $x\in \mathbb{R}^3$.
	\end{theorem}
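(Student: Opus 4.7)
The plan is to run a continuity argument based on a bootstrap for weighted decay of $\nabla^k\Delta^{-1}\rho$, followed by a scattering deduction. Assume that on a maximal interval $[0,T^*]$ the bound
\[
|\nabla\Delta^{-1}\rho(t,x)|+\langle t,x\rangle\,|\nabla^2\Delta^{-1}\rho(t,x)|+\langle t,x\rangle^{1+\kappa_0}|\nabla^3\Delta^{-1}\rho(t,x)|\leq C\,[f_0]\,\langle t,x\rangle^{-3+\kappa_0}
\]
holds with some large constant $C$; the goal is to replace $C$ by $C/2$ for $\epsilon_0$ small. The starting point is the Duhamel formula along free transport,
\[
f(t,x+tv,v)=f_0(x,v)-\int_0^t E(s,x+sv)\cdot\nabla_v\mu(v)\,ds-\int_0^t E(s,x+sv)\cdot(\nabla_v f)(s,x+sv,v)\,ds.
\]
Integrating in $v$ and using $\nabla_x\cdot E=\rho$ after one integration by parts in $v$ yields the Volterra-type equation
\[
\rho(t,y)=\rho^{\mathrm{free}}(t,y)+\int_0^t (t-s)\!\int_{\mathbb R^3}\!\rho(s,y-(t-s)v)\mu(v)\,dv\,ds+\mathcal N[\rho,f](t,y),
\]
where $\rho^{\mathrm{free}}(t,y)=\int f_0(y-tv,v)\,dv$ and $\mathcal N$ gathers the truly nonlinear contribution $\mathrm{div}_v(Ef)$.

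Next I would analyse each building block. For $\rho^{\mathrm{free}}$, the pointwise weight $[f_0]$ combined with stationary-phase-style integration by parts along the vector field $v\mapsto v-y/t$ produces the anisotropic bound $|\nabla^k\Delta^{-1}\rho^{\mathrm{free}}(t,y)|\lesssim [f_0]\,\langle t,y\rangle^{-2-k+\kappa_0}$ with the expected improvement inside the light cone $|y|\ll t$. For the linear Volterra kernel, Fourier transform in $x$ gives $\hat\rho(t,\xi)=\hat\rho^{\mathrm{free}}(t,\xi)+\int_0^t(t-s)\hat\mu((t-s)\xi)\hat\rho(s,\xi)\,ds+\hat{\mathcal N}$; since $\mu$ is the Poisson kernel $\hat\mu(\xi)$ is real-analytic with explicit exponential structure, the Penrose condition is satisfied, and one can solve the Volterra equation with decay inherited from $\rho^{\mathrm{free}}$. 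To promote temporal decay to the sharp \emph{spatial} decay $\langle t,x\rangle^{-3+\kappa_0}$, I would split $E=E^{\mathrm{st}}+E^{\mathrm{osc}}$ into a ``static'' piece (slowly varying in time, governed by the residue of the resolvent near frequency zero) and an ``oscillatory/dispersive'' piece, estimating each by contour/Paley--Wiener manipulations exploiting the analyticity of $\hat\mu$. The $\kappa_0$ loss reflects standard Japanese-bracket logarithmic losses from the low-frequency behaviour.

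With these inputs, closing the bootstrap reduces to controlling the nonlinear term $\mathcal N$. Inserting the bootstrap bound on $E$ into $\int_0^t E\cdot\nabla_v f\,ds$, using the characteristic form of $\nabla_v f$ obtained by differentiating the Duhamel representation in $v$, and integrating in $(v,s)$ with the free-transport change of variable, one gains a factor $\epsilon_0$ overall and recovers a bound of the form $C^2[f_0]^2\langle t,x\rangle^{-3+\kappa_0}\lesssim\epsilon_0\cdot C\,[f_0]\,\langle t,x\rangle^{-3+\kappa_0}$, which beats $C/2$ once $\epsilon_0$ is small. Higher derivatives in $x$ are treated in the same way, commuting $\nabla_x$ with the characteristic flow at the cost of lower-order terms already controlled by the bootstrap.

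Once the bootstrap closes globally, the scattering statement \eqref{po} follows at once. Indeed, the derivative $\partial_\tau[f(\tau,x+\tau v,v)]=-E(\tau,x+\tau v)\cdot\nabla_v[\mu+f](\tau,x+\tau v,v)$ is integrable in $\tau\in[t,\infty)$ by the established decay of $E$ and by $|\nabla_v\mu(v)|\lesssim\langle v\rangle^{-5}$, so the limit $f_\infty(x,v)$ exists; the tail integral then gives precisely the right-hand side of \eqref{po} after relabelling the free-transport position. I expect the main obstacle to be the fourth step: upgrading purely temporal decay from the Volterra analysis to the anisotropic physical-space decay $\langle t,x\rangle^{-3+\kappa_0}$, since the light-cone gain requires a delicate splitting of $E$ and a careful use of the explicit form of $\hat\mu$. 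This is where the ``novel decomposition'' advertised in the introduction is designed to replace the heavier machinery of \cite{AIonescu2022}.
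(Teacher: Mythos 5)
Your proposal captures the overall shape of the argument (bootstrap in a weighted pointwise norm for $\nabla^k\Delta^{-1}\rho$, a Volterra equation for $\rho$, and a final scattering step by integrating the Duhamel derivative in time), and the scattering deduction at the end is essentially the argument of the paper. But several of the load-bearing devices you propose are not the ones the paper uses, and at least one step as written would not close.

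First, you set up Duhamel along \emph{free} transport and lump the reaction term into $\mathcal N=\mathrm{div}_v(Ef)$. That term carries $\nabla_v f$, which in these scattering regimes grows like $t$ even on the final profile, and a naive insertion of the bootstrap bounds will not give back $\langle t,x\rangle^{-3+\kappa_0}$ with a small prefactor; the ``free-transport change of variable'' does not by itself remove the secular $t$ factor. The paper avoids this entirely by writing Duhamel along the \emph{full nonlinear characteristics} (equation \eqref{0}), so that the nonlinear remainder $\mathcal R(\rho)$ is a \emph{difference} of $E\cdot\nabla_v\mu$ evaluated on the free trajectory versus the perturbed one; the gain comes from the smallness of $Y_{s,t},W_{s,t}$ and from the change-of-variable map $\Psi_{s,t}$ defined by $X_{s,t}(x,\Psi_{s,t}(x,v))=x-(t-s)v$, not from cancelling $\nabla_v f$ directly.

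Second, the decomposition you advertise, $E=E^{\mathrm{st}}+E^{\mathrm{osc}}$ via resolvent/contour analysis, is the mechanism of \cite{AIonescu2022}, which this paper is explicitly trying to replace. Here the Volterra equation is solved in closed form using $\hat\mu(\xi)=e^{-|\xi|}$, giving the resolvent kernel $\sin(s)G(s)$ with $\hat G(s,\xi)=e^{-s|\xi|}$ (equation \eqref{equation rho}); there is no Penrose-condition or contour argument. The oscillatory/static split is then performed on the \emph{density}, $\rho=\cos(t)\rho^1_{\mathrm{sing}}+\sin(t)\rho^2_{\mathrm{sing}}+\rho_{\mathrm{re}}$ (equation \eqref{a2}), after a frequency cutoff $G=G_<+G_>$ and \emph{two integrations by parts in $s$}. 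Those integrations by parts are the crucial step you are missing: they use the moment hierarchy $\mathbf R_0,\mathbf R_1,\mathbf R_2$ and the conservation laws \eqref{Z1} ($\partial_t\mathbf R_0+\mathrm{div}\,\mathbf R_1=0$, $\partial_t\mathbf R_1+\mathrm{div}\,\mathbf R_2=\mathrm{div}(E\otimes E)-\tfrac12\nabla|E|^2$) to push $\partial_s$ derivatives from the resolvent onto higher moments, which is precisely what turns the unfavorable $\langle t,x\rangle^{-2}$ low-frequency contribution into the oscillatory singular parts with decay $\langle t,x\rangle^{-3}$ plus a residual of the correct order. Without this moment/conservation-law structure, your step ``promote temporal decay to the sharp spatial decay'' is exactly the gap you identify at the end of your own plan, and it is not filled by the $E^{\mathrm{st}}+E^{\mathrm{osc}}$ heuristic.

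So the verdict: the skeleton (Volterra $+$ bootstrap $+$ scattering) matches, but the two key ingredients of the paper — the full-characteristics remainder $\mathcal R(\rho)$ with the map $\Psi_{s,t}$, and the $(\mathbf R_0,\mathbf R_1,\mathbf R_2)$ conservation-law integration by parts producing $\rho^{1,2}_{\mathrm{sing}}$ and $\rho_{\mathrm{re}}$ — are both absent, and the route you propose in their place ($\mathrm{div}_v(Ef)$ plus $E^{\mathrm{st}}+E^{\mathrm{osc}}$) would require reconstructing the much heavier machinery of \cite{AIonescu2022} to close.
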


			By considering the associated characteristics $\left(X_{s,t}(x,v),V_{s,t}(x,v)\right)$, we express the solution $f$ explicitly as
		\begin{equation}\label{0}
			f(t,x,v)=f_0(X_{0,t}(x,v),V_{0,t}(x,v))-\int_{0}^{t}E(s,X_{s,t}(x,v))\cdot\nabla_v\mu(V_{s,t}(x,v))ds.
		\end{equation}
		We introduce the following key decompositions
		\begin{align*}
			&	\mathcal{I}(\rho)(t,x,v)=f_0(X_{0,t}(x,v),V_{0,t}(x,v)),\\&
			\mathcal{R}(\rho)(t,x,v)=\int_0^tE(s,x-(t-s)v)\cdot\nabla_v\mu(v)-E(s,X_{s,t}(x,v))\cdot\nabla_v\mu(V_{s,t}(x,v)))dvds,
		\end{align*}
		which lead to important macroscopic quantities such as
		\begin{align}\label{b1}
			&\mathbf{R}_0(t,x)=\int \mathcal{I}(\rho)(t,x,v)+\mathcal{R}(\rho)(t,x,v) dv,\\&\label{b2}
			\mathbf{R}_1(t,x)=\int( \mathcal{I}(\rho)(t,x,v)+\mathcal{R}(\rho)(t,x,v) ) vdv,\\&\label{b3}
			\mathbf{R}_2(t,x)=\int( \mathcal{I}(\rho)(t,x,v)+\mathcal{R}(\rho)(t,x,v) ) v\otimes vdv.
		\end{align}
		These quantities obey the following conservation laws and structural equations, which are critical to the analysis (see \eqref{Z1}):
			\begin{align}\label{Z0}
				\partial_t\mathbf{R}_0+\text{div}\mathbf{\mathbf{R}_1}=0,~ \partial_t\mathbf{\mathbf{R}_1}+\text{div}\mathbf{R}_2=\operatorname{div}(E(t,x)\otimes E(t,x))-\frac{1}{2}\nabla(| E(t,x)|^2).
			\end{align}	
Using the formula for the density $\rho$, as detailed in  \eqref{a2}, and leveraging the assumption $$\iint f_0(x,v)dxdv=0,$$
we systematically establish global estimates for $\rho$. These results align with and extend the methodologies developed for the screened Vlasov-Poisson system in \cite{HanKwanD2021}, \cite{HNX1} and \cite{HNX2}.

		\section{Solving the density equation and Bootstrap setup}
		This section focuses on solving the density equation $\rho(t,x)$ using the macroscopic moments $\mathbf{R}_0,\mathbf{R}_1,\mathbf{R}_2$. Following the approach in \cite{HanKwanD2021} and \cite{AIonescu2022}, we derive a decomposition of $\rho(t,x)$ into oscillatory and residual components, which allows us to establish precise decay estimates for the bootstrap argument.
		
		Starting from the solution representation \eqref{0}, the density  $\rho(t,x)$ satisfies
	\begin{equation}\label{equation rho}
		\rho=-\int_{0}^{t}\sin(s)G(s)\star\mathbf{R}_0(\rho)(t-s)(x) ds+\mathbf{R}_0(\rho)(t,x),
	\end{equation}
	where  
	\begin{align*}
		\hat G(s,\xi)=e^{-s|\xi|}, \quad G(s,x)=\frac{1}{\pi^2}\frac{ s}{(s^2+|x|^2)^{2}},\quad \forall s>0.
	\end{align*}
	As in \cite{HanKwanD2021},  the  characteristics $\left(X_{s,t}(x,v),V_{s,t}(x,v)\right)$ of \eqref{eq2} can be given  for any $0\leq s\leq t< \infty $, 
	\begin{align}
		\label{z4}
	&	X_{s,t}(x,v)=x-(t-s)v+Y_{s,t}(x-vt,v),~~V_{s,t}(x,v)=v+W_{s,t}(x-vt,v),\\&
	\label{definition Y W}
	Y_{s,t}(x,v)=\int_{s}^{t}(\tau-s)E(\tau,x+\tau v+Y_{\tau,t}(x,v))d\tau,\quad
	W_{s,t}(x,v)=-\int_{s}^{t}E(\tau,x+\tau v+Y_{\tau,t}(x,v))d\tau.
	\end{align}
\subsection{Decomposition of $\rho(t,x)$}
We define the quantity
\begin{equation}
	F(t,x,v):=f(t,x,v)+\int_0^tE(s,x-(t-s)v)\cdot\nabla\mu(v)ds.
\end{equation}
This can be split into two components
 $$F(t,x,v)=\mathcal{I}(\rho)(t,x,v)+\mathcal{R}(\rho)(t,x,v),$$ 
 where 
  \begin{align*}& \mathcal{I}(\rho)(t,x,v) = f_0(X_{0,t}(x,v), V_{0,t}(x,v)),\\& \mathcal{R}(\rho)(t,x,v) = \int_{0}^{t} \left(E(s, x - (t-s)v) \cdot \nabla_v \mu(v) - E(s, X_{s,t}(x,v)) \cdot \nabla_v \mu(V_{s,t}(x,v))\right)ds. \end{align*}
Then we have 
		\begin{align*}
	\partial_tF+v\cdot\nabla_xF=-E\cdot\nabla_vf.
\end{align*}
The macroscopic moments $\mathbf{R}_0,\mathbf{R}_1,\mathbf{R}_2$  defined in \eqref{b1}, \eqref{b2} and \eqref{b3}
satisfy the following coupled system 
\begin{align}
	\partial_t\mathbf{R}_0+\text{div}\mathbf{R}_1=0,\quad \partial_t\mathbf{R}_1+\text{div}\mathbf{R}_2=
	E(t,x)\rho(t,x)
\end{align}
Using the relationship for the electric field and charge density
$$E(t,x)\rho(t,x)=\operatorname{div}(E(t,x)\otimes E(t,x))-\frac{1}{2}\nabla(| E(t,x)|^2),$$ 
the system can be rewritten as
\begin{align}\label{Z1}
	\partial_t\mathbf{R}_0+\text{div}\mathbf{R}_1=0,\quad \partial_t\mathbf{R}_1+\text{div}\mathbf{R}_2=\operatorname{div}(E(t,x)\otimes E(t,x))-\frac{1}{2}\nabla(| E(t,x)|^2).
\end{align}
This formulation encapsulates the conservation laws and structural equations central to the analysis. The system not only describes the evolution of density and momentum, but also captures the influence of nonlinear interactions through the electric field term. It can be easily checked that 
	\begin{align*}
		\mathbf{R}_0(\rho)(0,x)=\tilde{\rho}_{0,0}(x), \quad \mathbf{R}_1(\rho)(0,x)= \tilde{\rho}_{1,0}(x),\quad 
		\mathbf{R}_2(\rho)(0,x)= \tilde{\rho}_{2,0}(x),
	\end{align*}
where
$\tilde{\rho}_{j,0}(x)=\int v^{\otimes^j}f_0(x,v)dv.$

\subsection{Singular and Residual components of $\rho(t,x)$}

	To analyze $\rho(t,x)$, we introduce a smooth cut-off function $\chi$ such that $\chi=1$ in $B_1(0)$ and $\chi=0$ in $\mathbb{R}^2\backslash B_2(0).$\\
	Set $\phi=\hat \chi$. We define 
	\begin{equation}
		f_{<}=f\star\phi,~~~	f_{>}=f-f\star\phi.
	\end{equation}
Using this decomposition, the density equation becomes
	\begin{equation*}
		\rho(t,x)=-\int_{0}^{t}\sin(s)G_{>}(s)\star \mathbf{R}_0(\rho)(t-s)(x)ds+\mathbf{R}_0(\rho)(t,x)-\int_{0}^{t}\sin(s)G_{<}(s)\star \mathbf{R}_0(\rho)(t-s)(x)ds.
	\end{equation*}
	By integration by parts twice,  and using \eqref{Z1} and  the initial conditions
	$$\mathbf{R}_0(\rho)(0,x)=\tilde{\rho}_{0,0}(x), \quad \partial_t\mathbf{R}_0(\rho)(0,x)= -\operatorname{div}\tilde{\rho}_{1,0}(x),$$ 
	we derive
	\begin{align}\label{a2}
		\rho(t,x)= \cos(t)\rho_{sing}^1(t,x)+\sin(t)\rho_{sing}^2(t,x)+	\rho_{re}(t,x),
	\end{align}
where 
\begin{align*}
&	\rho_{sing}^1(t,x)=G_{<}(t)\star \tilde\rho_{0,0}(x),\\&
\rho_{sing}^2(t,x)=-(\partial_tG_{<}(t)\star \tilde\rho_{0,0}(x)+G_{<}(t)\star \operatorname{div} \tilde\rho_{0,1}(x)),\\&
	\rho_{re}(t,x)=	\mathbf{R}_{0,>}(\rho)(t,x)-\int_{0}^{t}\sin(s)G_{>}(s)\star \mathbf{R}_0(\rho)(t-s)(y)ds		+\int_{0}^{t}\sin(s)\partial_{s}^2G_{<}(s)\star \mathbf{R}_0(\rho)(t-s)(x)ds	\\&\qquad+2\int_{0}^{t}\sin(s)\nabla\partial_{s} G_{<}(s)\star\mathbf{R}_{1}(\rho)(t-s)(x)ds+	\int_{0}^{t}\sin(s)\nabla^{\otimes^2} G_{<}(s)\star\mathbf{R}_2(\rho)(t-s)(x)ds\\&\qquad
	-	\int_{0}^{t}\sin(s)\nabla^2 G_{<}(s)\star(E\otimes E)(t-s)(x)ds	+\frac{1}{2}	\int_{0}^{t}\sin(s)\Delta G_{<}(s)\star(|E|^2)(t-s)(x)ds.
\end{align*}

Using the Fourier transform properties of $G_{<}$ and $G_>$, we can obtain the following decay estimates of the kernel
		\begin{align}\label{esG1}
		&	|\partial_{t}^{j_1}\nabla_x^{j_2}\nabla (-\Delta)^{-1}G_{<}(t,x)|\lesssim \frac{1}{\langle x,t\rangle^{2+j_1+j_2}},\\
		&	|\partial_{t}^{j_1}\nabla_x^{j_2}G_{<}(t,x)|\lesssim \frac{\langle t\rangle \mathbf{1}_{j_1=0}}{\langle x,t\rangle^{4+j_2}}+\frac{\mathbf{1}_{j_1\geq 1}}{\langle x,t\rangle^{3+j_1+j_2}},\\&
		|\partial_{t}^{j_1}\nabla_x^{j_2}\nabla (-\Delta)^{-1}G_{>}(t,x)|\lesssim \frac{t\mathbf{1}_{j_1=0}}{(t+|x|)^{4+j_2}\langle x,t\rangle^{7}}+\frac{\mathbf{1}_{j_1\geq 1}}{(t+|x|)^{3+j_1+j_2}\langle x,t\rangle^{7}}.\label{esG2}
	\end{align}
These estimates will play a critical role in bounding $\rho_{re}(t,x)$ and completing the bootstrap argument.

\section{Norms and the bootstrap proposition.}
We now introduce the key norms used in our analysis and state the bootstrap proposition, which plays a central role in establishing the global existence and stability of solutions.

The following norms are used to quantify the behavior of the density and the moments
		\begin{align*}
		&	\|g\|_{1,T}=\sup_{t\in [0,T]}\|\langle t,x\rangle^{3-\kappa_0}\nabla\Delta^{-1}g\|_{L^{\infty}}+\|\langle t,x\rangle^{4-\kappa_0}\nabla^2\Delta^{-1}g\|_{L^{\infty}}+\|\langle t,x\rangle^{4}\nabla^3\Delta^{-1}g\|_{L^{\infty}},\\&
				\|g\|_{2,T}=\sup_{t\in [0,T]}\|\langle t,x\rangle^{4}\nabla g\|_{L^{\infty}}+\sup_{t\in [0,T]}\|\langle t,x\rangle^{3} g\|_{L^{\infty}},
		\end{align*}
	where $\langle t,x\rangle=\sqrt{1+|x|^2+t^2}$ and $\kappa_0>0$ is a small parameter.
	
	Under the initial data assumptions, the singular parts of the density  $\rho_{sing}^1,\rho_{sing}^2$ satisfy the estimate
\begin{align}\label{a1}
\sum_{j=1,2}|\partial_{t}^{n_2}\nabla^{1+n_1} \Delta^{-1}	\rho_{sing}^j(t,x)|\lesssim \frac{	[f_0]}{\langle t,x\rangle^{3+n_1+n_2}}, ~~~\forall n_1+n_2\leq 4.
\end{align}
Using the decomposition \eqref{a2} and the estimate \eqref{a8}, we can bound
\begin{equation}\label{a7}
	\|\rho\|_{1,T}\lesssim [f_0]+\|( \mathbf{R}_0(\rho),\mathbf{R}_1(\rho), \mathbf{R}_2(\rho))\|_{2,T}+	\|\rho\|_{1,T}^2
\end{equation}
for any $T>0$.

Now we state the main bootstrap proposition, which provides the control over $\|\rho\|_{1,T}$.

	\begin{proposition} \label{propoglobal3d}  There exists a $\epsilon>0$ such that if$$	M:=\|\rho\|_{1,T}+[f_0]\leq \epsilon\ll 1$$ for some $T>0$, then 
	\begin{align}\label{A}
		\|\rho\|_{1,T}\lesssim [f_0]+\|\rho\|_{1,T}^2.
	\end{align}
\end{proposition}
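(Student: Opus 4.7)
The plan is to reduce matters, via the already-established inequality \eqref{a7}, to controlling $\|(\mathbf{R}_0(\rho),\mathbf{R}_1(\rho),\mathbf{R}_2(\rho))\|_{2,T}$ by $[f_0]+\|\rho\|_{1,T}^2$. Since $\mathbf{R}_j(\rho)=\int(\mathcal{I}(\rho)+\mathcal{R}(\rho))v^{\otimes j}\,dv$, this splits naturally into a linear/initial piece and a quadratic/reaction piece. The singular pieces $\rho^1_{sing},\rho^2_{sing}$ are already under control by \eqref{a1}, so the work is with the macroscopic moments.

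The first step is to control the characteristic corrections $Y_{s,t}, W_{s,t}$ given in \eqref{definition Y W}. Under $M\le\epsilon$, the definition of $\|\cdot\|_{1,T}$ together with \eqref{esG1}--\eqref{esG2} supplies pointwise bounds $|E(\tau,\cdot)|\lesssim \|\rho\|_{1,T}\langle \tau,\cdot\rangle^{-3+\kappa_0}$ and corresponding decay for $\nabla_x E$. A standard contraction in $(Y_{s,t},W_{s,t})$ along the straight-line trajectory $x+\tau v$ gives smallness of the corrections, comparability $\langle X_{0,t}(x,v),V_{0,t}(x,v)\rangle\sim\langle x-tv,v\rangle$, and bounds on $(\nabla_{x,v}Y_{s,t},\nabla_{x,v}W_{s,t})$ that are again controlled by $\|\rho\|_{1,T}$.

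The second step is the bound for $\int\mathcal{I}(\rho)v^{\otimes j}\,dv$. Changing variables $v\mapsto y=x-tv$ (more precisely $v\mapsto X_{0,t}(x,v)$ using Step 1 for the Jacobian), the weighted hypothesis $|f_0(y,w)|\lesssim [f_0]\langle y,w\rangle^{-10}$ gives, after integrating the $v$-weight against $\langle x-tv,v\rangle^{-10}$, the pointwise decay $\langle t,x\rangle^{-3}[f_0]$ and one extra factor $\langle t,x\rangle^{-1}$ per $x$-derivative, since each $\partial_x$ either hits $f_0$ through the flow (producing $t^{-1}$ via the Jacobian) or hits a perturbation which is already small. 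This yields the required $\|\cdot\|_{2,T}$ bound for the $\mathcal{I}$-part with constant $[f_0]$.

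The third and main step is the quadratic bound for $\int\mathcal{R}(\rho)v^{\otimes j}\,dv$. One writes
\begin{align*}
\mathcal{R}(\rho)(t,x,v)&=\int_0^t\bigl[E(s,x-(t-s)v)-E(s,X_{s,t})\bigr]\cdot\nabla_v\mu(v)\,ds\\
&\quad+\int_0^tE(s,X_{s,t})\cdot\bigl[\nabla_v\mu(v)-\nabla_v\mu(V_{s,t})\bigr]\,ds,
\end{align*}
and Taylor-expands using $X_{s,t}-(x-(t-s)v)=Y_{s,t}(x-vt,v)$ and $V_{s,t}-v=W_{s,t}(x-vt,v)$. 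Because both $Y_{s,t}$ and $W_{s,t}$ are themselves time integrals of $E$, the integrand is genuinely quadratic in $E$ and hence in $\rho$. Integrating in $v$ (the Schwartz decay of $\nabla_v\mu$ and its Hessian provides the $v$-localization) and in $s$ against the decay $\langle s,\cdot\rangle^{-3+\kappa_0}$ along trajectories produces the weighted bound $\|\rho\|_{1,T}^2\langle t,x\rangle^{-3}$, with the analogous gradient bound $\|\rho\|_{1,T}^2\langle t,x\rangle^{-4}$ coming from differentiating the characteristics using the Step 1 estimates on $\nabla_{x,v}Y_{s,t}$. Combining Steps 2 and 3 gives $\|(\mathbf{R}_0,\mathbf{R}_1,\mathbf{R}_2)\|_{2,T}\lesssim [f_0]+\|\rho\|_{1,T}^2$, which plugged into \eqref{a7} yields \eqref{A}.

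The principal technical obstacle is Step 3: producing the sharp anisotropic decay $\langle t,x\rangle^{-3}$ and $\langle t,x\rangle^{-4}$ (rather than the simpler $\langle t\rangle^{-3}$) for the $v$-integral of $\mathcal{R}(\rho)$, since the weight $\langle t,x\rangle$ mixes time and space and one must track how the $v$-integration (which converts time decay of $E$ into space decay through the dispersive change of variable $v\mapsto x-(t-s)v$) interacts with the trajectory deformation $Y_{s,t}$. One must also confirm that the inherently quadratic structure of $\mathcal{R}(\rho)$ survives each application of $\nabla_x$ without losing an $E$-factor, which is precisely where the derivative bounds on $(Y_{s,t},W_{s,t})$ from Step 1 are crucial.
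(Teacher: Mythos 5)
Your high-level reduction is correct and matches the paper: apply \eqref{a7} and then prove Proposition \ref{propoglobal3db}, splitting $\mathbf{R}_j$ into $\mathcal{I}_j$ (controlled by $[f_0]$) and $\mathcal{R}_j$ (quadratic in $E$). Your Step 1 (bounds on $Y_{s,t}, W_{s,t}$) and Step 2 (bound on $\mathcal{I}_j$ via the change of variables along the flow and the weighted decay of $f_0$) are essentially the paper's Steps 1 and 3(a).

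However, there is a genuine gap in your Step 3. For the gradient bound $|\nabla_x\mathcal{R}_j|\lesssim\langle t,x\rangle^{-4}\|\rho\|_{1,T}^2$ at large $t$, the paper does \emph{not} simply ``differentiate the characteristics using the Step 1 estimates.'' Rather, it first applies the dispersive vector-field identity
\begin{align*}
t\partial_{x_i}\mathcal{T}[E,\nabla\mu,v^{\otimes j}]=\mathcal{T}[\tilde F_i,\nabla\mu,v^{\otimes j}]+\mathcal{T}[E,\partial_i\nabla\mu,v^{\otimes j}]+\mathcal{T}[E,\nabla\mu,\partial_i v^{\otimes j}]+\mathcal{T}^{1,i}+\mathcal{T}^{2,i},
\end{align*}
with $\tilde F_i(s,\cdot)=s\partial_{x_i}E(s,\cdot)$, trading $t\partial_{x_i}$ for $s\partial_{x_i}-\partial_{v_i}$ along the free flow. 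Even after this, the term $\mathcal{T}[\tilde F_i,\nabla\mu,v^{\otimes j}]$ produces (after Taylor expansion in $Y_{s,t},W_{s,t}$) an iterated integral with the factor $(\tau-s)s$, and the paper explicitly flags that ``the second term does not decay fast enough.'' The fix is the inverse map $\Psi_{s,t}$ defined by $X_{s,t}(x,\Psi_{s,t}(x,v))=x-(t-s)v$ together with the bounds \eqref{a13} and \eqref{a15}; the change of variables $v\mapsto\Psi_{s,t}(x,v)$ inside $\mathcal{T}[\tilde F_i,\cdot,\cdot]$ replaces the growing factor $(\tau-s)$ by $\tfrac{(\tau-s)(t-\tau+1)}{t-s}$, which is what makes the decay in Lemma \ref{4.2} (terms $A_7,A_8$) close. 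Your proposal has no analogue of the $\Psi_{s,t}$ map (your ``Step 2'' is the $\mathcal{I}_j$ bound, whereas the paper's Step 2 is precisely the construction and estimation of $\Psi_{s,t}$), and the naive Taylor-expansion route you describe would stall at exactly the point the paper identifies as the crux. You correctly diagnose Step 3 as the obstacle but do not supply the mechanism that overcomes it.
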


This proposition follows as a consequence of \eqref{a7} and the proposition below.

		\begin{proposition} \label{propoglobal3db} 	Under the same assumptions, we have
			\begin{align}\label{A0}
			\|( \mathbf{R}_0(\rho),  \mathbf{R}_1(\rho),  \mathbf{R}_2(\rho))\|_{2,T}\lesssim [f_0]+\|\rho\|_{1,T}^2.
			\end{align}
		\end{proposition}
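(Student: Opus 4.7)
The plan is to split each moment as
\[
\mathbf R_j(\rho)(t,x)=\mathbf R_j^{\mathcal I}(t,x)+\mathbf R_j^{\mathcal R}(t,x),\qquad \mathbf R_j^{\star}(t,x):=\int v^{\otimes j}\star(\rho)(t,x,v)\,dv,\quad \star\in\{\mathcal I,\mathcal R\},
\]
for $j=0,1,2$, and to prove separately $\|\mathbf R_j^{\mathcal I}\|_{2,T}\lesssim [f_0]$ and $\|\mathbf R_j^{\mathcal R}\|_{2,T}\lesssim \|\rho\|_{1,T}^2$.

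For the initial-data part $\mathbf R_j^{\mathcal I}$: under the bootstrap assumption $M\ll 1$ the map $v\mapsto w:=V_{0,t}(x,v)=v+W_{0,t}(x-vt,v)$ is a near-identity diffeomorphism with Jacobian $1+O(M)$. Changing variables brings $\mathbf R_j^{\mathcal I}$ to the form $\int (v(w))^{\otimes j}f_0(\widetilde X(x,w),w)\,dw$ (times $1+O(M)$), where $\widetilde X(x,w)=x-tw+O(M)$. Using $|f_0(a,b)|\lesssim [f_0]\langle a,b\rangle^{-10}$ this reduces to bounding $\int \langle x-tw,w\rangle^{-(10-j)}\,dw$, which I evaluate explicitly via the identity
\[
1+|x-tw|^2+|w|^2 = 1+\tfrac{|x|^2}{1+t^2}+(1+t^2)\bigl|w-\tfrac{tx}{1+t^2}\bigr|^2
\]
to obtain the bound $\langle t\rangle^{4}/\langle t,x\rangle^{7}\lesssim \langle t,x\rangle^{-3}$, yielding the $\langle t,x\rangle^{3}$-weighted part of the norm. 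For the gradient bound $\langle t,x\rangle^{4}|\nabla\mathbf R_j^{\mathcal I}|\lesssim [f_0]$, the case $t\le 1$ is immediate from the same argument applied to $|\nabla f_0|\lesssim [f_0]\langle\cdot\rangle^{-10}$. For $t\ge 1$ the direct derivative estimate falls short by one power of $\langle t,x\rangle$ in the regime $|x|\lesssim t$, and the missing factor is extracted from a further dispersive substitution $u=(x-\widetilde X(x,w))/t\approx w$, which turns $\partial_x$ into $(1/t)\partial_u$ acting on the $f_0$ composition and gains precisely the needed $1/t$.

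For the forcing part $\mathbf R_j^{\mathcal R}$: I split the integrand via the telescoping identity
\begin{align*}
&E(s,x-(t-s)v)\cdot\nabla_v\mu(v)-E(s,X_{s,t})\cdot\nabla_v\mu(V_{s,t})\\
&\qquad=\bigl(E(s,x-(t-s)v)-E(s,X_{s,t})\bigr)\cdot\nabla_v\mu(v)+E(s,X_{s,t})\cdot\bigl(\nabla_v\mu(v)-\nabla_v\mu(V_{s,t})\bigr),
\end{align*}
and apply the mean value theorem using $X_{s,t}-(x-(t-s)v)=Y_{s,t}(x-vt,v)$ and $V_{s,t}-v=W_{s,t}(x-vt,v)$ from \eqref{z4}. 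Combining \eqref{definition Y W} with the $\|\cdot\|_{1,T}$ pointwise bounds $|E(\tau,y)|\lesssim M\langle \tau,y\rangle^{-(3-\kappa_0)}$, $|\nabla E(\tau,y)|\lesssim M\langle \tau,y\rangle^{-(4-\kappa_0)}$ and a small-$M$ fixed-point, I obtain
\[
|Y_{s,t}(y,v)|\lesssim M\!\int_s^t\!\frac{(\tau-s)\,d\tau}{\langle \tau,y+\tau v\rangle^{3-\kappa_0}},\qquad |W_{s,t}(y,v)|\lesssim M\!\int_s^t\!\frac{d\tau}{\langle \tau,y+\tau v\rangle^{3-\kappa_0}}.
\]
Together with $|\nabla_v^k\mu(v)|\lesssim \langle v\rangle^{-4-k}$, every term in the telescoping identity becomes pointwise of order $M^2$ times an integrable-in-$v$ weight; integrating in $v$ and $s\in[0,t]$ reduces the estimate to time convolutions of the form $\int_0^t\langle \tau\rangle^{-a}\langle t-\tau,x+\cdots\rangle^{-b}\,d\tau$, which produce the required $\langle t,x\rangle^{-3}$ (resp.\ $\langle t,x\rangle^{-4}$) decay with prefactor $\lesssim \|\rho\|_{1,T}^2$.

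The main obstacle is the gradient estimate for $\mathbf R_j^{\mathcal I}$ in the regime $1\leq t$, $|x|\lesssim t$: the straightforward pointwise bound yields only $\langle t,x\rangle^{-3}$, and the missing power has to come from the dispersive change of variables, which requires controlling uniformly in $(x,v)$ the Jacobian and the correction $\widetilde X(x,w)-(x-tw)$ coming from the perturbed characteristics. The $\mathcal R$-part produces many subsidiary terms once derivatives are distributed among $E$, $\mu$, $Y$, and $W$, but each is a routine bilinear estimate in $\|\rho\|_{1,T}$ once the $v$- and $s$-integrals are set up.
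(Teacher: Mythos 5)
Your overall decomposition $\mathbf R_j=\mathcal I_j+\mathcal R_j$, the telescoping-plus-mean-value treatment of $\mathcal R_j$, and the characteristics bounds on $Y_{s,t},W_{s,t}$ all match the paper's strategy. However, you have the location of the genuine difficulty exactly reversed, and this produces a real gap.

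You identify the "main obstacle" as the gradient bound for $\mathcal I_j$ when $t\ge 1$, $|x|\lesssim t$, and resolve it by trading $t\partial_x$ for $\partial_v$ (integration by parts in the $v$-integral). That is fine, and essentially what the paper does implicitly when it asserts $\langle t\rangle^m|\nabla_x^m\mathcal I_j|\lesssim [f_0]\int\langle x-tv,v\rangle^{-7}dv$. But for $\mathcal R_j$ you claim the gradient estimate reduces to "routine bilinear estimates" and "time convolutions" once derivatives are distributed. That is not the case, and the paper treats this as the crux of the whole argument. If you apply the same $t\partial_{x_i}$-for-$\partial_{v_i}$ trade that you used on $\mathcal I_j$ to the operator $\mathcal T[E,\nabla\mu,v^{\otimes j}]$ defining $\mathcal R_j$, you do gain the needed factor of $\langle t\rangle$ on most of the resulting terms, but it inevitably produces the leftover term $\mathcal T[\tilde F_i,\nabla\mu,v^{\otimes j}]$ with $\tilde F_i(s,\cdot)=s\,\partial_{x_i}E(s,\cdot)$. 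Bounding this term by substituting the pointwise bounds $|\nabla E|\lesssim M\langle\cdot\rangle^{-4+\kappa_0}$, $|\nabla^2E|\lesssim M\langle\cdot\rangle^{-4}$, and the $Y,W$ estimates yields an expression of the schematic form
\begin{align*}
\int_0^t\int_s^t(\tau-s)\,s\int_{\R^3}\langle s,x-(t-s)v\rangle^{-4}\,\langle\tau,x-(t-\tau)v\rangle^{-3+\kappa_0}\,\langle v\rangle^{-3}\,dv\,d\tau\,ds,
\end{align*}
which does not decay at the required rate $\langle t\rangle\langle t,x\rangle^{-4}$ (the $\langle s,\cdot\rangle^{-4}$ weight has no $\kappa_0$ margin and the extra factor of $s$ is lethal). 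The paper closes this gap with a genuinely nonlinear device: the change of variables $v\mapsto\Psi_{s,t}(x,v)$ defined by $X_{s,t}(x,\Psi_{s,t}(x,v))=x-(t-s)v$, which turns $E(s,X_{s,t})$ into $E(s,x-(t-s)v)$ and converts the problematic term into a \emph{difference} $\bigl|(\Psi_{s,t})^{\otimes j}\nabla\mu(V_{s,t}\circ\Psi_{s,t})\det(\nabla_v\Psi_{s,t})-v^{\otimes j}\nabla\mu(v)\bigr|$, which is small of order $M$ by \eqref{a13}, \eqref{a15}, and \eqref{a10}. Without this (or some equivalent mechanism) your argument does not prove the $\langle t,x\rangle^{4}$-weighted gradient part of $\|\mathcal R_j\|_{2,T}$ for $t\ge 1$. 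Your treatment of the undifferentiated $\mathcal R_j$ bound and the $t\le 1$ case is fine, but the $t\ge 1$ gradient case needs the $\Psi_{s,t}$ substitution and the companion integral estimates of Lemma \ref{4.2}, which you have not supplied.
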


	With Proposition \ref{propoglobal3db} established, we can deduce the global existence of solutions to the perturbed system \eqref{eq2}, with the density $\rho(t,x)$ satisfying
$$
		\|\rho\|_{1,\infty}\lesssim [f_0].$$
	This conclusion follows from a standard bootstrap argument (see \cite{AIonescu2022, HNX2}). Additionally, using the argument in Proposition 8.1 of \cite{AIonescu2022}, we can establish the existence of a function $f_\infty\in C^{0,1}_{x,v}$ such that
	\begin{align}
	\langle v\rangle^{5}	|f(t,x+tv,v)-f_\infty(x,v)|\lesssim [f_0]\int_{t}^{\infty}\tau \langle \tau,x+(\tau-t)v\rangle^{-3+\kappa_0}d\tau	.
\end{align}
for any $t>0$ and $x\in \mathbb{R}^3$. This completes the proof of Theorem \ref{mainthm}. \medskip

\begin{proof}[Proof of Proposition \ref{propoglobal3db}]  {\bf Step 1.} \textit{Estimates for $Y_{s,t}$ and $W_{s,t}$}.\smallskip

 Following the approach in \cite{HNX1}, we can bound  $Y_{s,t},W_{s,t}$ as follows 
		\begin{align*}
			&\|\nabla_{x}^{n_1}\nabla_{v}^{n_2}Y_{s,t}\|_{L^\infty_{x,v}}\lesssim\left( \mathbf{1}_{n\leq 1}\langle s\rangle^{-1-n_1+\kappa_0}+\mathbf{1}_{n= 2}\langle s\rangle^{-2+n_2}\right)M,\\&
		\|\nabla_{x}^{n_1}\nabla_{v}^{n_2}W_{s,t}\|_{L^\infty_{x,v}}\lesssim\left(\mathbf{1}_{n\leq 1}\langle s\rangle^{-2-n_1+\kappa_0}+\mathbf{1}_{n= 2}\langle s\rangle^{-3+n_2}\right)M,
		\end{align*}
	for any $n:=n_1+n_2\leq  2,  n_2=0,1$, where $M\ll 1$. Using these bounds, we can obtain the following pointwise estimates
		\begin{align}\label{a9}
			&|\nabla_{x}^{n_1}\nabla_{v}^{n_2}Y_{s,t}(x,v)|\lesssim \int_{s}^{t}(\tau-s)\tau^{n_2}\left[\mathbf{1}_{n\leq 1}\langle \tau,x+\tau v\rangle^{-3-n+\kappa_0}+\mathbf{1}_{n= 2}\langle \tau,x+\tau v\rangle^{-4}\right]d\tau M,\\&
			|\nabla_{x}^{n_1}\nabla_{v}^{n_2}W_{s,t}(x,v)|\lesssim\int_{s}^{t}\tau^{n_2}\left[\mathbf{1}_{n\leq 1}\langle \tau,x+\tau v\rangle^{-3-n+\kappa_0}+\mathbf{1}_{n= 2}\langle \tau,x+\tau v\rangle^{-4}\right]d\tau M,\label{a10}
		\end{align}
	for any $n=n_1+n_2\leq  2, n_2=0,1.$\smallskip
	
	{\bf Step 2.} \textit{Properties of the map $\Psi_{s,t}$}.\smallskip

	Define the map $\Psi_{s,t}(x,v)$ such that
 	\begin{align}&
 	\label{X(x,Psi)}
 	X_{s,t}(x,\Psi_{s,t}(x,v))=x-(t-s)v,\\
 	\label{z16a}
 	&\Psi_{s,t}(x,v)-v=-\Phi_{s,t}(x,\Psi_{s,t}(x,v)),
 \end{align}
 where 
 \begin{equation}
 	\Phi_{s,t}(x,v)=-\frac{1}{t-s}\int_{s}^{t}(\tau-s)E(\tau,x-(t-\tau)v+Y_{\tau,t}(x-vt,v))d\tau.\label{z16b}
 \end{equation}
	We can bound  $|\Psi_{s,t}(x,v)-v|$ as follows
	\begin{align*}
	|	\Psi_{s,t}(x,v)-v|&\lesssim \frac{1}{t-s}\int_{s}^{t}(\tau-s)\langle \tau,x-(t-\tau)\Psi_{s,t}+Y_{\tau,t}(x-\Psi_{s,t}t,\Psi_{s,t})\rangle^{-3+\kappa_0}d\tau M\\
	&\lesssim \frac{1}{t-s}\int_{s}^{t}(\tau-s)\langle \tau,x-(t-\tau)\Psi_{s,t}\rangle^{-3+\kappa_0}d\tau M.
	\end{align*}
This implies that $|\Psi_{s,t}(x,v)-v|\lesssim \langle t\rangle^{-1}\langle s\rangle^{-1+\kappa_0} M$ and 
	\begin{align}\label{a13}
	|	\Psi_{s,t}(x,v)-v|
	\lesssim\int_{s}^{t} \frac{\tau-s}{t-s}\langle \tau,x-(t-\tau)v\rangle^{-3+\kappa_0}d\tau M.
\end{align}
 For $\Psi_{s,t}=\Psi_{s,t}(x,v)$, we have
	\begin{align*}
	&|\nabla_{v}\Psi_{s,t}(x,v)-I|\leq \|\nabla_v	\Psi_{s,t}\|_{L^\infty_{x,v}} |(\nabla_{v}\Phi_{s,t})(x,\Psi_{s,t}(x,v))|\\ &\lesssim  \|\nabla_v	\Psi_{s,t}\|_{L^\infty_{x,v}} \frac{1}{t-s}\int_{s}^{t}(\tau-s)(t-\tau+1)\langle \tau,x-(t-\tau)\Psi_{s,t}+Y_{\tau,t}(x-\Psi_{s,t}t,\Psi_{s,t})\rangle^{-4+\kappa_0}d\tau M
	\\ &\lesssim  \|\nabla_v	\Psi_{s,t}\|_{L^\infty_{x,v}} \frac{1}{t-s}\int_{s}^{t}(\tau-s)(t-\tau+1)\langle \tau,x-(t-\tau)v\rangle^{-4+\kappa_0}d\tau M.
\end{align*}
Thus, $\|\nabla_v	\Psi_{s,t}\|_{L^\infty_{x,v}}\leq 4$ and 
	\begin{align}\label{a15}
	|	\det(\nabla_{v}\Psi_{s,t})(x,v)-1|\lesssim  \int_{s}^{t}\frac{(\tau-s)(t-\tau+1)}{t-s}\langle \tau,x-(t-\tau)v\rangle^{-4+\kappa_0}d\tau M.
\end{align}

{\bf Step 3}. \textit{Estimates  of $\mathcal{I}_j$ and $\mathcal{R}_j$.}\smallskip

We define
	\begin{align*}
		&\mathcal{I}_j(\rho)(t,x)=\int \mathcal{I}(\rho)(t,x,v)v^{\otimes^j}dv,~~~
		\mathcal{R}_j(\rho)(t,x)\!\!=\!\!\int_0^t\!\!\!\int  \mathcal{R}(\rho)(t,x,v)v^{\otimes^j}dvds.
	\end{align*}
Thus, $\mathbf{R}_j(\rho)=	\mathcal{I}_j(\rho)+	\mathcal{R}_j(\rho)$.\smallskip

\textbf{(a) Bound for $\mathcal{I}_j(\rho)$}. We can derive that for $j\leq 2$ and $m=0,1$,
		\begin{align*}
		\langle t\rangle^m|\nabla^m_x\mathcal{I}_j(\rho)(t,x)|\lesssim M \int_{\R^3}\langle x-tv, v\rangle^{-7}dv [f_0].
		\end{align*}
By \eqref{a3}, we obtain 
		\begin{align}\label{a16}
			\sum_{j=0,1,2}	\|\mathcal{I}_j(\rho)\|_{2,T}\lesssim [f_0].
		\end{align}
		
	\textbf{(b) Bound for $\mathcal{R}_j(\rho)$}. For $j=0,1,2$, $\mathcal{R}_j(\rho)$ can be expressed as
	\begin{align}
		\mathcal{R}_j(\rho)(t,x)&=\int_0^t\!\!\!\int_{\R^3}\!\!(E(s,x-(t-s)v)\!\cdot\!\nabla_v\mu(v)\!-\!E(s,X_{s,t}(x,v))\!\cdot\!\nabla_v\mu(V_{s,t}(x,v)))v^{\otimes^j}dvds\nonumber\\
		&:=\mathcal{T}[E,\nabla\mu,v^{\otimes^j}](t,x).
	\end{align}
We have 
\begin{align*}
		|\mathcal{R}_j(\rho)(t,x)|\lesssim& \int_0^t\int|E(s,x-(t-s)v)|\langle v\rangle^{-4} |W_{s,t}(x-tv,x)|dv ds\\&+
		\int_0^t\int|E(s,x-(t-s)v)-E(s,X_{s,t}(x,v))|\langle v\rangle^{-3}dv ds.
\end{align*}
By the definition of $\|\rho\|_{1,T}$, we get by \eqref{a9} and  \eqref{a10}  that
\begin{align}\nonumber
	|\mathcal{R}_j(\rho)(t,x)|&\lesssim \int_0^t\int_{s}^{t}\int_{\R^3}\langle s,x-(t-s)v\rangle^{-3+\kappa_0}  \langle \tau,x-(t-\tau) v\rangle^{-3+\kappa_0} \frac{dv d\tau ds}{\langle v\rangle^{4}}\|\rho\|_{1,T}M\\&\quad\nonumber+
	\int_0^t\int_{s}^{t}(\tau-s)\int_{\R^3} \langle s,x-(t-s)v\rangle^{-4+\kappa_0}\langle \tau,x-(t-\tau) v\rangle^{-3+\kappa_0}\frac{dv ds}{\langle v\rangle^{3}}\|\rho\|_{1,T}M
	\\&:= A_1(t,x)\|\rho\|_{1,T}M+A_2(t,x)\|\rho\|_{1,T}M.\label{a12}
\end{align}
By \eqref{a4}, we obtain 
\begin{align}\label{a5}
	|\mathcal{R}_j(\rho)(t,x)|\lesssim \langle x,t\rangle^{-3}\|\rho\|_{1,T}.
\end{align}

For $t\in [0,1]$, we have 
\begin{align*}
	|\nabla_{x}\mathcal{R}_j(\rho)(t,x)|\lesssim& \int_0^t\int|(\nabla_{x}E)(s,x-(t-s)v)\cdot\nabla_v\mu(v)-(\nabla_{x}E)(s,X_{s,t}(x,v))\nabla_v\mu(V_{s,t}(x,v))|\langle v\rangle^2dvds\\&+
\int_0^t\int|(\nabla_{x}E)(s,X_{s,t}(x,v))|\langle v\rangle^{-3}|\nabla Y_{s,t}(x-tv,v)|dvds
\\&+
\int_0^t\int|E(s,X_{s,t}(x,v))|\langle v\rangle^{-4}|\nabla W_{s,t}(x-tv,v)|dvds.
\end{align*}
By the definition of $\|\rho\|_{1,T}$, we get by \eqref{a9} and  \eqref{a10}  that 
\begin{align*}
	|\nabla_{x}\mathcal{R}_j(\rho)(t,x)|\lesssim& \int_0^t\int_{s}^{t}\int_{\R^3}\langle x-(t-s)v\rangle^{-4+\kappa_0}\langle v\rangle^{-4}  \langle x-(t-\tau) v\rangle^{-3+\kappa_0} dv d\tau ds\|\rho\|_{1,T}M\\&+
	\int_0^t\int_{s}^{t}(\tau-s)\int_{\R^3} \langle x-(t-s)v\rangle^{-4}\langle x-(t-\tau) v\rangle^{-3+\kappa_0}\langle v\rangle^{-3}dvd\tau  ds\|\rho\|_{1,T}M
	\\&+
	\int_0^t\int_{s}^{t}(\tau-s)\int_{\R^3} \langle x-(t-s)v\rangle^{-4+\kappa_0}\langle x-(t-\tau) v\rangle^{-4+\kappa_0}\langle v\rangle^{-3}dvd\tau  ds\|\rho\|_{1,T}M
		\\&+
	\int_0^t\int_{s}^{t}\int_{\R^3} \langle x-(t-s)v\rangle^{-3+\kappa_0}\langle x-(t-\tau) v\rangle^{-4+\kappa_0}\langle v\rangle^{-4}dv d\tau ds\|\rho\|_{1,T}M.
\end{align*}
Then we divide the integration of $v$ into $\int\mathbf{1}_{\langle x\rangle\geq 4(t-s)\langle v \rangle}+\int\mathbf{1}_{\langle x\rangle\leq 4(t-s)\langle v \rangle}$ and easily get 
\begin{equation}\label{a11}
		|\nabla_{x}\mathcal{R}_j(\rho)(t,x)|\lesssim \langle x\rangle^{-4}\|\rho\|_{1,T}M.
\end{equation}

For $t\geq 1$,  as in \cite{HNX1}, we have  
\begin{align*}
t	\partial_{x_i}\mathcal{T}[E,\nabla\mu,v^{\otimes^j}](t,x)&=\mathcal{T}[	\tilde{F}_i,\nabla\mu,v^{\otimes^j}](t,x)+ 	\mathcal{T}[E,\partial_{i}\nabla\mu,v^{\otimes^j}](t,x)\\&+	\mathcal{T}[E,\nabla\mu,\partial_{i}v^{\otimes^j}](t,x)+\mathcal{T}^{1,i}[E,\nabla\mu,v^{\otimes^j}](t,x)+\mathcal{T}^{2,i}[E,\nabla\mu,v^{\otimes^j}](t,x),~~i=1,2,3,
\end{align*}
where $
\tilde{F}_i(s,.):=s\partial_{x_i}E(s,.)$ and 
\begin{align*}
	\mathcal{T}^{1,i}[E,\nabla\mu,v^{\otimes^j}](t,x)&:=-\int_{0}^{t}\int 
	E\left(s,X_{s,t}\right)(\nabla^2\mu)\left(V_{s,t}\right)\cdot(\partial_{v_i}W_{s,t})(x-tv,v) v^{\otimes^j}
	dvds,\\
	\mathcal{T}^{2,i}[E,\nabla\mu,v^{\otimes^j}](t,x)&:=-\int_{0}^{t}\int 
	\nabla	E\left(s,X_{s,t}\right)\cdot(\partial_{v_i}Y_{s,t})(x-tv,v) \nabla\mu\left(V_{s,t}\right) v^{\otimes^j}
	dvds.
\end{align*}
As in \eqref{a12}, we have 
\begin{align*}
	&\frac{|\mathcal{T}^{1,i}[E,\nabla\mu,v^{\otimes^j}](t,x)|}{\|\rho\|_{1,T}M}\lesssim  \int_0^t\int_{s}^{t}\int\tau \langle s,x-(t-s)v\rangle^{-3+\kappa_0}\langle \tau,x-(t-\tau) v\rangle^{-4+\kappa_0}\frac{dv d\tau ds}{\langle v\rangle^{4}}:= A_3(t,x),\\&	\frac{|\mathcal{T}^{2,i}[E,\nabla\mu,v^{\otimes^j}](t,x)|}{\|\rho\|_{1,T}M}\lesssim  	\int_0^t\int_{s}^{t}\int_{\R^3}(\tau-s)\tau \langle s,x-(t-s)v\rangle^{-4+\kappa_0}\langle \tau,x-(t-\tau) v\rangle^{-4+\kappa_0}\frac{dv d\tau ds}{\langle v\rangle^{3}}:= A_4(t,x),\\
&	\frac{1}{\|\rho\|_{1,T}M}\left(	|\mathcal{T}[E,\partial_{i}\nabla\mu,v^{\otimes^j}](t,x)|+	|	\mathcal{T}[E,\nabla\mu,\partial_{i}v^{\otimes^j}](t,x)|\right)\\&\quad\quad\quad\quad\lesssim   \int_0^t\int_{s}^{t}\int_{\R^3}\langle s,x-(t-s)v\rangle^{-3+\kappa_0} \langle \tau,x-(t-\tau) v\rangle^{-3+\kappa_0} \frac{dv d\tau ds}{\langle v\rangle^{5}}\\&\quad\quad\quad\quad\quad+ 
	\int_0^t\int_{s}^{t}(\tau-s)\int_{\R^3} \langle s,x-(t-s)v\rangle^{-4+\kappa_0}\langle \tau,x-(t-\tau) v\rangle^{-3+\kappa_0}\frac{dv d\tau ds}{\langle v\rangle^{4}}
	:= A_5(t,x)+A_6(t,x),
\end{align*}
and 
\begin{align*}
	&
	\frac{|	\mathcal{T}[	\tilde{F}_i,\nabla\mu,v^{\otimes^j}](t,x)|}{\|\rho\|_{1,T}M}\lesssim  \int_0^t\int_{s}^{t}s\int_{\R^3}\langle s,x-(t-s)v\rangle^{-4+\kappa_0}  \langle \tau,x-(t-\tau) v\rangle^{-3+\kappa_0} \frac{dv d\tau ds}{\langle v\rangle^{4}}\\&\quad\quad\quad\quad+  
	\int_0^t\int_{s}^{t}(\tau-s)s\int_{\R^3} \langle s,x-(t-s)v\rangle^{-4}\langle \tau,x-(t-\tau) v\rangle^{-3+\kappa_0}\frac{dv d\tau ds}{\langle v\rangle^{3}}.
\end{align*}
The second term does not decay fast enough. To address the insufficient decay of this term, we apply the variable change strategy described in \cite{HNX2}. We rewrite $	\mathcal{T}[	\tilde{F}_i,\nabla\mu,v^{\otimes^j}](t,x)$  using the map $\Psi_{s,t}$
which satisfies $$X_{s,t}(x,\Psi_{s,t}(x,v))=x-(t-s)v.$$
Using this map, the term $	\mathcal{T}[	\tilde{F}_i,\nabla\mu,v^{\otimes^j}](t,x)$ can be bounded as

\begin{align*}
	&|\mathcal{T}[	\tilde{F}_i,\nabla\mu,v^{\otimes^j}](t,x)|\\ &\lesssim \int_{0}^{t}s\int |(\nabla E)(s,x-(t-s)v)| \left|(\Psi_{s,t}(x,v))^{\otimes^j}\nabla\mu(V_{s,t}(x,\Psi_{s,t}))\det(\nabla_v\Psi_{s,t})	-v^{\otimes^j}\nabla\mu(v)\right|dvds.
\end{align*}
The difference term inside the integral can be decomposed and estimated as follows
\begin{align*}
&	\left|(\Psi_{s,t}(x,v))^{\otimes^j}\nabla\mu(V_{s,t}(x,\Psi_{s,t}))\det(\nabla_v\Psi_{s,t})	-v^{\otimes^j}\nabla\mu(v)\right|\lesssim\langle v\rangle^{-3}|\det(\nabla_v\Psi_{s,t})(x,v)-1|\\&\qquad+
\langle v\rangle^{-4}\left(|\Psi_{s,t}(x,v)-v|+|W_{s,t}(x-t\Psi_{s,t},\Psi_{s,t}(x,v))|\right).
\end{align*}
Then by \eqref{a13}, \eqref{a15} and \eqref{a10}, we get 
\begin{align*}
	&	\left|(\Psi_{s,t}(x,v))^{\otimes^j}\nabla\mu(V_{s,t}(x,\Psi_{s,t}))\det(\nabla_v\Psi_{s,t})	-v^{\otimes^j}\nabla\mu(v)\right|\\
&\lesssim \langle v\rangle^{-3}\int_{s}^{t}\frac{(\tau-s)(t-\tau+1)}{t-s}\langle \tau,x-(t-\tau)v\rangle^{-4+\kappa_0}d\tau M+
\langle v\rangle^{-4}\int_{s}^{t} \langle \tau,x-(t-\tau)v\rangle^{-3+\kappa_0}d\tau M.
\end{align*}
Substituting the above estimates back, we obtain
\begin{align*}
\frac{	|\mathcal{T}[	\tilde{F}_i,\nabla\mu,v^{\otimes^j}](t,x)|}{\|\rho\|_{1,T}M}&\lesssim\int_{0}^{t}\int_{s}^{t}s\int \langle s,x-(t-s)v\rangle^{-4+\kappa_0} \langle \tau,x-(t-\tau)v\rangle^{-3+\kappa_0}\frac{dv d\tau ds}{	\langle v\rangle^{4}}\\&\quad+\int_{0}^{t}\int_{s}^{t}\frac{s(\tau-s)(t-\tau+1)}{t-s}\int \langle s,x-(t-s)v\rangle^{-4+\kappa_0} \langle \tau,x-(t-\tau)v\rangle^{-4+\kappa_0}\frac{dv d\tau ds}{	\langle v\rangle^{3}}\\&:= A_7(t,x)+A_8(t,x).
\end{align*}
By \eqref{a4} and \eqref{a6}, we obtain 
\begin{equation*}
	|\nabla_{x}\mathcal{R}_j(\rho)(t,x)|\lesssim \langle t,x\rangle^{-4}\|\rho\|_{1,T}M.
\end{equation*}

Combining the bounds for $\mathcal{R}_j(\rho)$ and $\mathcal{I}_j(\rho)$, we conclude \eqref{A0}. This proves Proposition \ref{propoglobal3d}.
	\end{proof}

\section{Appendix}
In this section, we present some fundamental estimates used in the preceding sections.

\begin{lemma}[Basic Integral Estimates] Let $\beta_1\geq \max(4, \beta_2)\geq 3$  and  $n>0$. There holds 
	\begin{align}\label{a8}
		\int_{0}^{t}\int\frac{1}{\langle t-s,x-y\rangle^{\beta_1}} \frac{1}{\langle s,y\rangle^{\beta_2}} dyds\lesssim \mathbf{1}_{\beta_1>4} \frac{1}{\langle t,x\rangle^{\beta_2}}+\mathbf{1}_{\beta_1=4} \frac{\log(2+|t|+|x|)}{\langle t,x\rangle^{\beta_2}},
	\end{align}
	and 
		\begin{align}\label{a3}
		\int \langle x-tv,v\rangle^{-n-3} dv\lesssim \frac{1}{\langle t\rangle^3} \langle \frac{x}{\langle t\rangle}\rangle^{-n}.
	\end{align}
\end{lemma}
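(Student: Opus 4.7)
For the first estimate, my plan is to exploit the triangle-type bound $\langle t,x\rangle\lesssim \langle t-s,x-y\rangle+\langle s,y\rangle$, which forces at least one of the two factors in the denominator to be $\gtrsim \langle t,x\rangle$. Accordingly I would split the integration domain $[0,t]\times\R^3$ into $\Omega_A=\{\langle s,y\rangle\geq \langle t-s,x-y\rangle\}$ and its complement $\Omega_B$, and treat the two regions symmetrically, extracting the pointwise decay $\langle t,x\rangle^{-\beta_2}$ from whichever factor is dominant.

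On $\Omega_A$ one has $\langle s,y\rangle\gtrsim \langle t,x\rangle$, so the bound $\langle s,y\rangle^{-\beta_2}\lesssim\langle t,x\rangle^{-\beta_2}$ pulls the desired decay outside the integral and what remains is
\[
\int_0^t\int_{\R^3}\langle t-s,x-y\rangle^{-\beta_1}\,dy\,ds=\int_0^t\int_{\R^3}\langle \sigma,z\rangle^{-\beta_1}\,dz\,d\sigma.
\]
A standard spherical-coordinates computation gives $\int_{\R^3}\langle \sigma,z\rangle^{-\beta_1}\,dz\lesssim \langle \sigma\rangle^{3-\beta_1}$, which integrates on $[0,t]$ to a constant when $\beta_1>4$ and to $\log(2+t)$ when $\beta_1=4$. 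On $\Omega_B$ I would use instead $\langle t-s,x-y\rangle^{-\beta_1}\leq \langle t,x\rangle^{-\beta_2}\langle t-s,x-y\rangle^{-(\beta_1-\beta_2)}\leq \langle t,x\rangle^{-\beta_2}\langle s,y\rangle^{-(\beta_1-\beta_2)}$, where the last step uses $\langle t-s,x-y\rangle\geq \langle s,y\rangle$ on $\Omega_B$ together with $\beta_1\geq\beta_2$; the integrand is then dominated by $\langle t,x\rangle^{-\beta_2}\langle s,y\rangle^{-\beta_1}$ and the integration reduces to the very same critical integral, giving the same bound.

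For the second estimate, the natural move for $t\geq 1$ is the change of variables $w=x-tv$, which gives $dv=t^{-3}\,dw$ and transforms the integrand into $(1+|w|^2+|x-w|^2/t^2)^{-(n+3)/2}$. I would split the $w$-integration into $\{|w|\leq|x|/2\}$, where $|x-w|\geq|x|/2$ forces the denominator to be $\gtrsim \langle x/t\rangle^2$ and integration over a ball of volume $\lesssim|x|^3$ yields the contribution $\lesssim|x|^3\langle x/t\rangle^{-n-3}$, and $\{|w|\geq|x|/2\}$, where $\int_{|w|\geq|x|/2}\langle w\rangle^{-n-3}\,dw\lesssim \langle x\rangle^{-n}$; checking the two regimes $|x|\leq t$ and $|x|\geq t$ (using $t\geq 1$) shows both pieces are controlled by the target $t^{-3}\langle x/t\rangle^{-n}$. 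The case $t\leq 1$ reduces to proving $\int\langle x-tv,v\rangle^{-n-3}\,dv\lesssim \langle x\rangle^{-n}$, which I would handle directly by splitting $|v|$ at $|x|/2$, using $|x-tv|\gtrsim|x|$ on the first piece (since $t\leq 1$) and $\langle v\rangle\gtrsim\langle x\rangle$ on the second.

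The main subtlety I anticipate is the critical case $\beta_1=4$ of the first estimate: the inner integral $\int_{\R^3}\langle \sigma,z\rangle^{-4}\,dz\sim \langle\sigma\rangle^{-1}$ is exactly logarithmic upon integration in $\sigma$, so no margin is available and it is essential that \emph{both} $\Omega_A$ and $\Omega_B$ collapse to this same critical integral. The step on $\Omega_B$ that uses $\beta_1\geq\beta_2$ together with $\langle t-s,x-y\rangle\geq\langle s,y\rangle$ to convert excess decay into an extra $\langle s,y\rangle^{-(\beta_1-\beta_2)}$ is precisely what avoids an unwanted power loss and is the delicate point that makes the argument close with only the stated logarithmic correction.
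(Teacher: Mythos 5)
Your proof of the first estimate \eqref{a8} is correct and essentially equivalent to the paper's, just organized slightly differently: the paper splits into three cases by comparing $\langle t,x\rangle$ with $\langle s,y\rangle$, while you split into two by comparing $\langle t-s,x-y\rangle$ with $\langle s,y\rangle$. Both reduce to the same critical integral $\int_0^t\langle \sigma\rangle^{3-\beta_1}d\sigma$, and your observation on $\Omega_B$ that the excess decay $\beta_1-\beta_2$ can be transferred from the $\langle t-s,x-y\rangle$ factor to the $\langle s,y\rangle$ factor (using $\langle t-s,x-y\rangle\geq\langle s,y\rangle$ and $\beta_1\geq\beta_2$) is exactly what is needed to close the argument with no loss.

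However, there is a genuine gap in your proof of the second estimate \eqref{a3} for $t\geq 1$, in the piece $\{|w|\leq |x|/2\}$. You bound the integrand only by the constant $\langle x/t\rangle^{-n-3}$ and then multiply by the volume $|x|^3$ of the ball, getting $t^{-3}|x|^3\langle x/t\rangle^{-n-3}$. This is \emph{not} controlled by the target $t^{-3}\langle x/t\rangle^{-n}$: the required inequality is $|x|\lesssim\langle x/t\rangle$, which fails whenever $1\ll|x|\lesssim t$ (where $\langle x/t\rangle\sim 1$) and is off by a factor $t^3$ when $|x|\geq t$. For example, with $|x|=t^{1/2}$ your bound gives $t^{-3/2}$ while the true value (and the target) is $t^{-3}$. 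The pure volume argument works in your $t\leq 1$ case because there the ball radius $|x|/2$ and the weight $\langle x\rangle$ scale together, but for $t\geq 1$ the ball radius $|x|/2$ and the weight $\langle x/t\rangle$ do not. The fix is to retain the $\langle w\rangle$ decay: on $\{|w|\leq|x|/2\}$ one has $\langle w,(x-w)/t\rangle\gtrsim\langle w,x/t\rangle$, and then the scaling identity $\int_{\R^3}\langle w,a\rangle^{-n-3}dw\sim a^{-n}$ with $a=\langle x/t\rangle$ gives exactly $\lesssim\langle x/t\rangle^{-n}$. The paper's proof avoids the split entirely by noting $\langle w,(x-w)/t\rangle\gtrsim\langle w,x/t\rangle$ for all $w$ (which one verifies by the same dichotomy $|w|\lessgtr|x|/2$) and then applying this scaling identity once; you should incorporate that scaling computation rather than the crude volume count.
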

\begin{proof} Let's  start with the double integral
	$$	\int_{0}^{t}\int\frac{1}{\langle t-s,x-y\rangle^{\beta_1}} \frac{1}{\langle s,y\rangle^{\beta_2}} dyds.$$
	Decomposing the integration based on the relative size of $\langle t-s,x-y\rangle$ and $\langle t,y\rangle$, we consider three cases: 
	$$\langle t,x\rangle>2\langle s,y\rangle,\quad \langle t,x\rangle<\langle s,y\rangle/2,\quad  \langle t,x\rangle\sim\langle s,y\rangle.$$
	Summing the contributions from these cases, we obtain \eqref{a8}.

 For $t\in [0,1]$, $\langle x-tv,v\rangle^{-n-3}\sim \langle x,v\rangle^{-n-3}$. When  $t\geq 1$,  we can bound
\begin{align*}
	\int \langle x-tv,v\rangle^{-n-3} dv=\frac{1}{t^3}	\int \langle w,\frac{x-w}{t}\rangle^{-n-3} dw\lesssim \frac{1}{t^3}	\int \langle w,\frac{x}{t}\rangle^{-n-3} dw\lesssim  \frac{1}{t^3} \langle \frac{x}{t}\rangle^{-n}.
\end{align*}
So, we obtain \eqref{a3}. 
\end{proof}
\begin{lemma}[Decay Estimates for $A_j(t,x)$]\label{4.2} There holds 
	\begin{align}&
		A_1(t,x)+A_2(t,x)\lesssim \langle x,t\rangle^{-3},\label{a4}\\&
	\sum_{k=3}^{8}A_k(t,x)\lesssim \langle t\rangle\langle x,t\rangle^{-4},\label{a6}
\end{align}
for any $t\geq 1$ and $x\in \mathbb{R}^3$.
\end{lemma}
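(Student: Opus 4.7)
The proof proceeds by estimating each $A_j(t,x)$, $j=1,\ldots,8$, using the two tools at hand: the spacetime estimate \eqref{a8} and the velocity estimate \eqref{a3}. The core idea is to carry out the two time integrals first via a one-dimensional geometric bound, and then reduce the remaining $v$-integral to a form controlled by \eqref{a3}. The workhorse is the following observation: for fixed $v\in\R^3$, the spacetime curve $s\mapsto(s,x-(t-s)v)$ moves with constant speed $\langle v\rangle$, and the associated line has minimal distance $d_v$ from the origin satisfying $\langle d_v\rangle\gtrsim\langle x-tv,v\rangle/\langle v\rangle$. Parameterizing by arc length and reducing to $\int_{\R}(1+d_v^2+r^2)^{-\alpha/2}\,dr\lesssim\langle d_v\rangle^{1-\alpha}$ (for $\alpha>1$) yields
\begin{equation*}
\int_0^t\langle s,x-(t-s)v\rangle^{-\alpha}\,ds\lesssim\frac{\langle d_v\rangle^{1-\alpha}}{\langle v\rangle},
\end{equation*}
and the same bound for $\int_s^t\langle\tau,x-(t-\tau)v\rangle^{-\alpha}\,d\tau$.

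\textbf{Estimates for $A_1$ and $A_2$.} Applying the workhorse to both time integrals (discarding $\tau\geq s$ in $A_1$ as a harmless overestimate, and absorbing $\tau-s\leq t-s$ into the enhanced $s$-kernel in $A_2$), we reduce to pure velocity integrals of the form $\int\langle v\rangle^{-p}\langle d_v\rangle^{-q}\,dv$. Splitting the domain into $|v|\leq 1$ and $|v|\geq 1$ and using the bound $\langle d_v\rangle^{-1}\lesssim\langle v\rangle/\langle x-tv,v\rangle$, this is controlled by $\int\langle x-tv,v\rangle^{-k}\,dv$ for a suitable $k\geq 4$; \eqref{a3} then gives $\langle t\rangle^{-3}\langle x/\langle t\rangle\rangle^{-(k-3)}\sim\langle t,x\rangle^{-3}$, establishing \eqref{a4}.

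\textbf{Estimates for $A_3$ through $A_8$.} The enhanced decay $\langle\cdot\rangle^{-4+\kappa_0}$ of one or both kernels, applied through the workhorse, yields the sharper $\langle d_v\rangle^{-3+\kappa_0}$ in place of $\langle d_v\rangle^{-2+\kappa_0}$; this gain of $\langle d_v\rangle^{-1}$ translates, after \eqref{a3}, into an extra factor $\langle t,x\rangle^{-1}$, upgrading the bound from $\langle t,x\rangle^{-3}$ to $\langle t,x\rangle^{-4}$. The extra time-growth factors in $A_3,\ldots,A_7$ (namely $\tau$, $(\tau-s)\tau$, $s$) each contribute at most a factor of $\langle t\rangle$, giving the target $\langle t\rangle\langle t,x\rangle^{-4}$. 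The main obstacle is $A_8$: the weight $s(\tau-s)(t-\tau+1)/(t-s)$ is not directly bounded by $\langle t\rangle$ times a separable time factor. We handle it via the elementary inequality $(\tau-s)(t-\tau+1)/(t-s)\lesssim\min(\tau-s,t-\tau+1)$ (valid when $t-s\gtrsim 1$; the complementary regime is trivial), splitting $A_8$ into two sub-integrals in each of which either $(\tau-s)$ or $(t-\tau+1)$ is absorbed into the corresponding $\langle\cdot\rangle^{-4+\kappa_0}$ kernel via the workhorse, reducing to the form of $A_6$ or its $s$-$\tau$ analog. Combining all estimates yields $\sum_{k=3}^8 A_k\lesssim\langle t\rangle\langle t,x\rangle^{-4}$, which is \eqref{a6}.
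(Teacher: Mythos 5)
Your workhorse reduction $\int_0^t\langle s,x-(t-s)v\rangle^{-\alpha}\,ds\lesssim\langle d_v\rangle^{1-\alpha}/\langle v\rangle$ with $\langle d_v\rangle\gtrsim\langle x-tv,v\rangle/\langle v\rangle$ is correct, and together with \eqref{a3} it does yield \eqref{a4} for $A_1$ (and, with the extra bookkeeping you sketch, for $A_2$). The gap is in the $A_3,\ldots,A_8$ estimates. You assert that the gain of $\langle d_v\rangle^{-1}$ (from the $\langle\cdot\rangle^{-4+\kappa_0}$ kernel) turns into an extra factor $\langle t,x\rangle^{-1}$ after \eqref{a3}, and that the loose time weights each cost at most $\langle t\rangle$. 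Neither half is correct. Estimate \eqref{a3} reads $\int\langle x-tv,v\rangle^{-n-3}\,dv\lesssim\langle t\rangle^{-3}\langle x/\langle t\rangle\rangle^{-n}$, so raising $n$ by one gains only $\langle x/\langle t\rangle\rangle^{-1}=\langle t\rangle\langle t,x\rangle^{-1}$ — a factor that is $\sim1$ whenever $|x|\lesssim t$, not $\langle t,x\rangle^{-1}$. Concretely for $A_3$ at $|x|\lesssim t$: the $|v|\leq1$ contribution to $\int\langle v\rangle^{-6}\langle d_v\rangle^{-5+2\kappa_0}\,dv$ is, after the change $w=x-tv$, of order $t^{-3}\int_{|x-w|\leq t}\langle w\rangle^{-5+2\kappa_0}\,dw\sim t^{-3}$, exactly the same as the $A_1$ baseline — the extra power of $\langle d_v\rangle^{-1}$ bought nothing. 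Multiplying by $\tau\leq t$ then gives $A_3\lesssim t^{-2}$, which misses the target $\langle t\rangle\langle t,x\rangle^{-4}\sim t^{-3}$ by a whole power of $t$. The alternative absorption $\tau\leq\langle\tau,\cdot\rangle$ reduces $A_3$ to $A_1\lesssim\langle t,x\rangle^{-3}$, which now fails at the other end $|x|\gg t$ (you need $t|x|^{-4}$, and $|x|^{-3}$ is larger). No uniform pointwise choice of how to discharge the time weight rescues this within the workhorse framework, because the workhorse discards the correlation between the growing factor $\tau$ and the decaying kernel $\langle\tau,x-(t-\tau)v\rangle^{-4+\kappa_0}$: when $v\approx x/t$ the product $\tau\langle\tau,\cdot\rangle^{-4+\kappa_0}$ concentrates near $\tau\sim t/|x|$, a fact your bound $\tau\leq t$ (or $\tau\leq\langle\tau,\cdot\rangle$) erases.

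The paper avoids this by first reducing to $|x|\geq t$ via the cited $L^\infty$ bound $\sum_j\|A_j(t)\|_{L^\infty}\lesssim t^{-3}$, and then splitting the $s$-integral at $t/2$ and changing variables $w=x-(t-s)v$; the decay $\langle v\rangle^{-4}\lesssim\langle x/t\rangle^{-4}$ (after localizing $|x-w|\gtrsim|x|$) is what produces the $\langle x\rangle^{-4}$ on the outside, rather than an improvement in the $\langle d_v\rangle$ power. Your proposal does not perform the reduction to $|x|\geq t$, and the pointwise workhorse estimate does not supply it. You would need either to invoke the $L^\infty$ bound and restrict to $|x|\geq t$, or to replace the two crude steps (pulling out $\tau\leq t$ and translating $\langle d_v\rangle^{-1}$ into $\langle t,x\rangle^{-1}$) with an estimate that tracks the $\tau$-kernel coupling, along the lines of the paper's $A_{j1}/A_{j2}$ split.
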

\begin{proof}  It is easy to check that
	\begin{align*}
\sum_{j}	\|A_j(t)\|_{L^\infty}\lesssim  t^{-3},
	\end{align*}
see \cite{HNX1}.
So, it is enough to prove that 
\begin{align*}
&A_1(t,x)+A_2(t,x)\lesssim \langle x\rangle^{-3},\\&
\sum_{k=3}^{8}A_k(t,x)\lesssim \langle t\rangle\langle x\rangle^{-4}
\end{align*}
for any  $|x|\geq t$ and $t\geq 1$. The proof follows by decomposing $A_{j}(t,x)$ into
\begin{align*}
		A_{j}(t,x)=	\int_0^{t/2}+
	\int_{t/2}^t=A_{j1}(t,x)+A_{j2}(t,x),
\end{align*}
for $j=1,\cdots,8.$

{\bf Step 1.} Estimate for $A_{j1}(t,x)$.\smallskip

We split the domain of $w$ based on its relative size compared to $x$, and using the bounds
\begin{align*}
	\int_{\R^3}\langle s,w\rangle^{-3+\kappa_0} \langle \tau,\frac{\tau-s}{t-s}x+\frac{t-\tau}{t-s}w\rangle^{-3+\kappa_0} dw\lesssim\langle s\rangle^{-\kappa_0} \langle \tau\rangle^{-3+3\kappa_0} (\frac{t-s}{t-\tau})^{2\kappa_0},
\end{align*}
we obtain 
\begin{align*}
A_{11}(t,x)&\lesssim	\int_0^{t/2}\int_{s}^{t}\int_{\R^3}\langle s,w\rangle^{-3+\kappa_0}\langle \frac{x}{t}\rangle^{-4}  \langle \tau,\frac{\tau-s}{t}x+\frac{t-\tau}{t}w\rangle^{-3+\kappa_0} \frac{dw d\tau ds}{t^3}\\&\quad+
\int_0^{t/2}\int_{s}^{t}\int_{\R^3}\mathbf{1}_{|x-w|\leq |x|/2}\langle s,x\rangle^{-3+\kappa_0}\langle \frac{x-w}{t}\rangle^{-4}  \langle \tau,x\rangle^{-3+\kappa_0} \frac{dw d\tau ds}{t^3}
\\
&\lesssim		\int_0^{t/2}\int_{s}^{t}\langle \frac{x}{t}\rangle^{-4}  \langle s\rangle^{-\kappa_0} \langle \tau\rangle^{-3+3\kappa_0} (\frac{t}{t-\tau})^{2\kappa_0}\frac{d\tau ds}{t^3}+\langle x\rangle^{-3}\lesssim\langle x\rangle^{-3}.
\end{align*}
Similarly, we also get 
\begin{align*}
		&A_{21}(t,x)
	\lesssim	\int_0^{t/2}\int_{s}^{t}(\tau-s)\int_{\R^3}\langle s,w\rangle^{-4+\kappa_0}\langle \frac{x}{t}\rangle^{-3}  \langle \tau,\frac{\tau-s}{t}x+\frac{t-\tau}{t}w\rangle^{-3+\kappa_0} \frac{dw d\tau ds}{t^3}\\&\quad+
	\int_0^{t/2}\int_{s}^{t}(\tau-s)\int_{\R^3}\mathbf{1}_{|x-w|\leq |x|/2}\langle s,x\rangle^{-4+\kappa_0}\langle \frac{x-w}{t}\rangle^{-3}  \langle \tau,x\rangle^{-3+\kappa_0} \frac{dw d\tau ds}{t^3}\\ \lesssim&
	\int_0^{t/2}\int_{s}^{t}\langle s\rangle^{-1+\kappa_0}\langle \frac{x}{t}\rangle^{-3}  \langle \tau\rangle^{-3+\kappa_0} \frac{d\tau ds}{t^3}+
	\int_0^{t/2}\int_{s}^{t}\langle s,x\rangle^{-4+\kappa_0}\langle \tau,x\rangle^{-2+2\kappa_0} \ln(2\langle x\rangle) d\tau ds
\lesssim \langle x\rangle^{-3},
\end{align*}
and 
\begin{align*}
	&A_{31}(t,x)+A_{51}(t,x)+A_{71}(t,x)
	\lesssim \int_0^{t/2}\int_{s}^{t}\int_{\R^3}\langle s,w\rangle^{-3+\kappa_0}\langle \frac{x}{t}\rangle^{-4}  \langle \tau,\frac{\tau-s}{t-s}x+\frac{t-\tau}{t-s}w\rangle^{-3+\kappa_0} \frac{dw d\tau ds}{t^3}\\&\quad\quad\quad\quad\quad\quad+
	\int_0^{t/2}\int_{s}^{t}\langle s,x\rangle^{-3+\kappa_0} \langle \tau,x\rangle^{-3+\kappa_0}  d\tau ds\\&\quad\quad\quad\quad
		\lesssim \int_0^{t/2}\int_{s}^{t}\langle \frac{x}{t}\rangle^{-4}  \langle s\rangle^{-\kappa_0} \langle \tau\rangle^{-3+3\kappa_0} (\frac{t}{t-\tau})^{2\kappa_0} \frac{ d\tau ds}{t^3}+
\langle t\rangle\langle x\rangle^{-5+2\kappa_0} \lesssim \langle t\rangle\langle x\rangle^{-4},\\
	&A_{61}(t,x)
	\lesssim \int_0^{t/2}\int_{s}^{t}(\tau-s)\int_{\R^3}\langle s,w\rangle^{-4+\kappa_0}\langle \frac{x}{t}\rangle^{-4}  \langle \tau,\frac{\tau-s}{t-s}x+\frac{t-\tau}{t-s}w\rangle^{-3+\kappa_0} \frac{dw d\tau ds}{t^3}\\&\quad\quad\quad\quad\quad\quad+
	\int_0^{t/2}\int_{s}^{t}(\tau-s)\langle s,x\rangle^{-4+\kappa_0} \langle \tau,x\rangle^{-3+\kappa_0}  d\tau ds\\&\quad\quad\quad\quad
	\lesssim \int_0^{t/2}\int_{s}^{t}(\tau-s)\langle \frac{x}{t}\rangle^{-4}  \langle s\rangle^{-1+\kappa_0} \langle \tau\rangle^{-3+\kappa_0}  \frac{ d\tau ds}{t^3}+
	\langle t\rangle\langle x\rangle^{-4} \lesssim \langle t\rangle\langle x\rangle^{-4}.
\end{align*}

Now we estimate $A_{41}(t,x),A_{81}(t,x)$. We have (write $\mathbf{1}_{(\tau-s)|x|\geq 2(t-\tau)|w|}+\mathbf{1}_{(\tau-s)|x|< 2(t-\tau)|w|}$ in the first iontegral)
\begin{align*}
&A_{41}(t,x)+A_{81}(t,x)
	\lesssim \int_0^{t/2}\int_{s}^{t}\tau(\tau-s)\int_{\R^3}\langle s,w\rangle^{-4+\kappa_0}\langle \frac{x}{t}\rangle^{-3}  \langle \tau,\frac{\tau-s}{t}x+\frac{t-\tau}{t}w\rangle^{-4+\kappa_0} \frac{dw d\tau ds}{t^3}\\&\quad+
	\int_0^{t/2}\int_{s}^{t}\tau (\tau-s)\int_{\R^3}\mathbf{1}_{|x-w|\leq |x|/2}\langle s,x\rangle^{-4+\kappa_0}\langle \frac{x-w}{t}\rangle^{-3}  \langle \tau,x\rangle^{-4+\kappa_0} \frac{dw d\tau ds}{t^3}\\&
		\lesssim \langle x\rangle^{-3}\int_0^{t/2}\int_{s}^{t}\tau(\tau-s)\int_{\R^3}\frac{t}{\tau-s}\langle x\rangle^{-1}\langle s,w\rangle^{-4+\kappa_0}  \langle \tau,\frac{\tau-s}{t}x+\frac{t-\tau}{t}w\rangle^{-3+\kappa_0} dw d\tau ds
\\&+\langle x\rangle^{-3}\int_0^{t/2}\int_{s}^{t}\tau(\tau-s)\int_{\R^3}
\frac{t-\tau}{\tau-s}\langle x\rangle^{-1}\langle s,w\rangle^{-3+\kappa_0} \langle \tau,\frac{\tau-s}{t}x\rangle^{-4+\kappa_0} dw d\tau ds
\\&\quad+	\int_0^{t/2}\int_{s}^{t}\tau (\tau-s)\log(|x|+2)\langle s,x\rangle^{-4+\kappa_0} \langle \tau,x\rangle^{-4+\kappa_0} d\tau ds\\&
\lesssim
	\langle x\rangle^{-4}\int_0^{t/2}\int_{s}^{t}t\tau\langle s\rangle^{-1+\kappa_0}  \langle \tau\rangle^{-3+\kappa_0}d\tau ds+
\langle x\rangle^{-4}\int_0^{t/2}\int_{s}^{t}t\tau\langle s\rangle^{-\kappa_0}  \langle \tau\rangle^{-4+3\kappa_0}d\tau ds
\\&\quad+\int_0^{t/2}\int_{s}^{t}\tau (\tau-s)\langle s,x\rangle^{-4+2\kappa_0} \langle \tau,x\rangle^{-4+\kappa_0} d\tau ds\lesssim \langle t\rangle
	\langle x\rangle^{-4}.
\end{align*}

{\bf Step 2.} Estimate for $A_{j2}(t,x)$. We have 
\begin{align*}
	&A_{12}(t,x)\lesssim	\int_{t/2}^t\int_{s}^{t}\int_{\R^3}\langle t,x\rangle^{-3+\kappa_0}\langle v\rangle^{-4}  \langle t,x\rangle^{-3+\kappa_0} dv d\tau ds
	\\&\quad\quad+
	\int_{t/2}^t\int_{s}^{t}\int_{\R^3}\langle t,x-(t-s)v\rangle^{-3+\kappa_0}\langle \frac{x}{t-s}\rangle^{-4}  \langle t,x-(t-\tau) v\rangle^{-3+\kappa_0} dv d\tau ds\\
	&\quad\lesssim \langle t,x\rangle^{-4+2\kappa_0}
	+
	\int_{t/2}^t\int_{s}^{t}\langle t\rangle^{-\kappa_0}(t-s)^{-3+2\kappa_0}\langle \frac{x}{t-s}\rangle^{-4}  \langle t\rangle^{-3+3\kappa_0} (t-\tau)^{-2\kappa_0}  d\tau ds
	\lesssim \langle x\rangle^{-3},\\&
	A_{22}(t,x)\lesssim	\int_{t/2}^t\int_{s}^{t}(\tau-s)\int_{\R^3}\mathbf{1}_{|v|\leq \frac{|x|}{2(t-s)}}\langle t,x\rangle^{-4+\kappa_0}\langle v\rangle^{-3}  \langle t,x\rangle^{-3+\kappa_0} dv d\tau ds
	\\&\quad\quad+
	\int_{t/2}^t\int_{s}^{t}(\tau-s)\int_{\R^3}\langle t,x-(t-s)v\rangle^{-4+\kappa_0}\langle \frac{x}{t-s}\rangle^{-3}  \langle t,x-(t-\tau) v\rangle^{-3+\kappa_0} dv d\tau ds\\
	&\quad\lesssim \langle x\rangle^{-3}
	+|x|^{-3}
	\int_{t/2}^t\int_{s}^{t}(\tau-s)\langle t\rangle^{-\kappa_0}(t-s)^{-1+2\kappa_0} \langle t\rangle^{-3+3\kappa_0} (t-\tau)^{-2\kappa_0}  d\tau ds\lesssim \langle x\rangle^{-3}.
\end{align*}

Similarly, we also get 
\begin{align*}
&	A_{32}(t,x)+	A_{52}(t,x)+	A_{72}(t,x)\lesssim	\int_{t/2}^t\int_{s}^{t}\int_{\R^3}\langle t,x\rangle^{-3+\kappa_0}\langle v\rangle^{-4}  \langle t,x\rangle^{-3+\kappa_0} dv d\tau ds
	\\&\quad\quad\quad\quad+
	\int_{t/2}^t\int_{s}^{t}\int_{\R^3}\langle t,x-(t-s)v\rangle^{-3+\kappa_0}\langle \frac{x}{t-s}\rangle^{-4}  \langle t,x-(t-\tau) v\rangle^{-3+\kappa_0} dv d\tau ds\lesssim \langle t\rangle\langle x\rangle^{-4},\\&
		A_{62}(t,x)\lesssim	\int_{t/2}^t\int_{s}^{t}(s-\tau)\int_{\R^3}\langle t,x\rangle^{-4+\kappa_0}\langle v\rangle^{-4}  \langle t,x\rangle^{-3+\kappa_0} dv d\tau ds
	\\&\quad+
	\int_{t/2}^t\int_{s}^{t}(s-\tau)\int_{\R^3}\langle t,x-(t-s)v\rangle^{-4+\kappa_0}\langle \frac{x}{t-s}\rangle^{-4}  \langle t,x-(t-\tau) v\rangle^{-3+\kappa_0} dv d\tau ds\lesssim \langle t\rangle\langle x\rangle^{-4},
\end{align*}
and 
\begin{align*}
	&A_{42}(t,x)\lesssim	\int_{t/2}^t\int_{s}^{t}(\tau-s)\tau\int_{\R^3}\mathbf{1}_{|v|\leq \frac{|x|}{2(t-s)}}\langle t,x\rangle^{-4+\kappa_0}\langle v\rangle^{-3}  \langle t,x\rangle^{-4+\kappa_0} dv d\tau ds
	\\&\quad+|x|^{-3}
	\int_{t/2}^t\int_{s}^{t}(\tau-s)\tau\int_{\R^3}\langle t,x-(t-s)v\rangle^{-4+\kappa_0}(t-s) ^3 \langle t,x-(t-\tau) v\rangle^{-4+\kappa_0} dv d\tau ds
	\\
	&\lesssim \langle t\rangle \langle x\rangle^{-4}
\\&\quad+|x|^{-3}
	\int_{t/2}^t\int_{s}^{t}(\tau-s)\tau\int_{\R^3}\mathbf{1}_{\frac{\tau-s}{t-s}|x|\leq 2(t-\tau) |v|}\langle t,(t-s)v\rangle^{-4+\kappa_0}(t-s) ^3 \langle t,\frac{\tau-s}{t-s}x-(t-\tau) v\rangle^{-4+\kappa_0} dv d\tau ds\\&\quad
	+|x|^{-3}
	\int_{t/2}^t\int_{s}^{t}(\tau-s)\tau\int_{\R^3}\langle t,(t-s)v\rangle^{-4+\kappa_0}(t-s) ^3 \langle t,\frac{\tau-s}{t-s}x\rangle^{-4+\kappa_0} dv d\tau ds
	\\
	&\lesssim \langle t\rangle \langle x\rangle^{-4}
	+|x|^{-3}
	\int_{t/2}^t\int_{s}^{t}(\tau-s)\tau\langle t,\frac{\tau-s}{t-\tau}x\rangle^{-4+\kappa_0}(t-s) ^3 \langle t\rangle^{-1+\kappa_0} (t-\tau)^{-3} d\tau ds\\&\quad
	+|x|^{-3}
	\int_{t/2}^t\int_{s}^{t}(\tau-s)\tau\langle t\rangle^{-1+\kappa_0} \langle t,\frac{\tau-s}{t-s}x\rangle^{-4+\kappa_0}  d\tau ds
		\\
	&\lesssim \langle t\rangle \langle x\rangle^{-4}
	+|x|^{-3}
	\int_{t/2}^t\int_{s}^{t}(\tau-s)\tau \mathbf{1}_{|\tau-s|\geq |t-\tau|} (\frac{\tau-s}{t-\tau}|x|)^{-4+\kappa_0}(t-s) ^3 \langle t\rangle^{-1+\kappa_0} (t-\tau)^{-3} d\tau ds\\&\quad
		+|x|^{-4}
	\int_{t/2}^t\int_{s}^{t}\tau\mathbf{1}_{|\tau-s|<|t-\tau|}\langle t\rangle^{-3+\kappa_0}(t-s) ^3 \langle t\rangle^{-1+\kappa_0} (t-\tau)^{-2} d\tau ds\\&\quad
	+|x|^{-4}
	\int_{t/2}^t\int_{s}^{t}(t-s)\tau\langle t\rangle^{-1+\kappa_0} \langle t\rangle^{-3+\kappa_0}  d\tau ds
	\lesssim \langle t\rangle \langle x\rangle^{-4}.
\end{align*}
To estimate $A_{82}(t,x)$, we divide the integration $\int_{t/2}^{t}$ into $\int_{t/2}^{t}\mathbf{1}_{t-s\geq 1/2}+\int_{t/2}^{t}\mathbf{1}_{t-s< 1/2}$ and get 
\begin{align*}
&A_{82}(t,x)\lesssim 	A_{42}(t,x)+\int_{t-\frac{1}{2}}^{t}\int_{s}^{t}s\int \langle t,x-(t-s)v\rangle^{-4+\kappa_0} \langle t,x-(t-\tau)v\rangle^{-4+\kappa_0}\frac{dv d\tau ds}{	\langle v\rangle^{3}}\\&\lesssim 
 \langle t\rangle \langle x\rangle^{-4}
 +\langle t\rangle \int_{t-1/2}^t\int_{s}^{t}\int_{\R^3}\mathbf{1}_{|v|\leq \frac{|x|}{2(t-s)}}\langle t,x\rangle^{-4+\kappa_0}\langle v\rangle^{-3}  \langle t,x\rangle^{-4+\kappa_0} dv d\tau ds
 \\&\quad+\langle t\rangle |x|^{-3}
 \int_{t-1/2}^t\int_{s}^{t}\int_{\R^3}\langle t,x-(t-s)v\rangle^{-4+\kappa_0}(t-s) ^3 \langle t,x-(t-\tau) v\rangle^{-4+\kappa_0} dv d\tau ds
 \\&\lesssim 
 \langle t\rangle \langle x\rangle^{-4}
\\&\quad +\langle t\rangle \langle x\rangle^{-3}
 \int_{t-1/2}^t\int_{s}^{t}\int_{\R^3}\mathbf{1}_{\frac{\tau-s}{t-s}|x|\leq 2(t-\tau) |v|}\langle t,(t-s)v\rangle^{-4+\kappa_0}(t-s) ^3 \langle t,\frac{\tau-s}{t-s}x-(t-\tau) v\rangle^{-4+\kappa_0} dv d\tau ds\\&\quad
 +\langle t\rangle \langle x\rangle^{-3}
 \int_{t-1/2}^t\int_{s}^{t}\int_{\R^3}\langle t,(t-s)v\rangle^{-4+\kappa_0}(t-s) ^3 \langle t,\frac{\tau-s}{t-s}x\rangle^{-4+\kappa_0} dv d\tau ds\\&\lesssim  \langle t\rangle \langle x\rangle^{-4}
 +\langle t\rangle \langle x\rangle^{-3}	\int_{t-1/2}^t\int_{s}^{t}\langle t,\frac{\tau-s}{t-\tau}x\rangle^{-4+\kappa_0}(t-s) ^3 \langle t\rangle^{-1+\kappa_0} (t-\tau)^{-3} d\tau ds\\&\quad
  +\langle t\rangle \langle x\rangle^{-4}
 \int_{t-1/2}^t\int_{0}^{|x|}\int_{\R^3}\langle t,(t-s)v\rangle^{-4+\kappa_0}(t-s) ^4 \langle t,\tau'\rangle^{-4+\kappa_0} dvd\tau' ds
 \\&\lesssim  \langle t\rangle \langle x\rangle^{-4}
+\langle t\rangle \langle x\rangle^{-3}	\int_{t-1/2}^t\int_{s}^{t}\mathbf{1}_{|\tau-s|<|t-\tau|}\langle t,\frac{\tau-s}{t-s}x\rangle^{-4+\kappa_0}(t-s) ^3 \langle t\rangle^{-1+\kappa_0} (t-\tau)^{-3} d\tau ds\lesssim  \langle t\rangle \langle x\rangle^{-4}.
\end{align*}

This completes the proof of Lemma \ref{4.2}.
\end{proof}

\section* {Acknowledgments}

Q. H.  Nguyen  is supported by the Academy of Mathematics and Systems Science, Chinese Academy of Sciences startup fund, and the National Natural Science Foundation of China (No. 12050410257 and No. 12288201) and  the National Key R$\&$D Program of China under grant 2021YFA1000800.  He also wants to thank  Alexandru Ionescu
		 for his stimulating comments and suggestion to consider the Vlasov-Poisson system. D. Wei is partially supported by the National Key R\&D Program of China under the grant 2021YFA1001500. Z. Zhang is partially supported by NSF of China under Grant 12288101.

%


\begin{thebibliography}{99}
			
			\bibitem{arsenev} A. Arsen\'ev. Global existence of a weak solution of Vlasov system of equations. \textit{U.S.S.R. Comp. Math. Math. Phys.}, 15 (1975), 131-143.
			
		\bibitem{CBardos1985}
		C. Bardos and P. Degond.  Global existence for the Vlasov-Poisson equation in 3 space variables with small initial data.\emph{ Annales de l'Institut Henri Poincar\'e C, Analyse non lin\'eaire}, 2(1985), 101-118..
		
		
		
		\bibitem{JbedIhes2015}	J. Bedrossian and N. Masmoudi. Inviscid damping and the asymptotic stability of planar shear flows in the $2D$ Euler equations. \emph{Publ. Math. Inst. Hautes \'Etudes Sci.}, 122(2015), 195–300.
		
		\bibitem{Bedrossiantunis}J. Bedrossian. Nonlinear echoes and Landau damping with insufficient regularity. \emph{Tunis. J. of Math.}, 3 (2021), 121–205.
		
				\bibitem{JBedrossianNa2020}J. Bedrossian, N. Masmoudi and C. Mouhot.  Linearized wave-damping structure of Vlasov-Poisson in $\mathbb{R}^3$.\emph{ SIAM J. Math. Anal.}  54 (2022),  4379–4406.
		
		\bibitem{JBedrossian2018}J. Bedrossian,  N. Masmoudi and C. Mouhot. Landau damping in finite regularity for unconfined systems with screened interactions. \emph{Comm. Pure Appl. Math},  71(2018),  537-576.
		
		\bibitem{JBedrossian2016}
		J. Bedrossian, N. Masmoudi and C. Mouhot. Landau damping: paraproducts and Gevrey regularity. \emph{ Ann. PDE},  2(2016),  Art. 4, 71 pp.
		
		\bibitem{FBouchut1991}
		F. Bouchut. Global weak solution of the Vlasov-Poisson system for small electrons mass.\emph{ Comm. Partial Differential Equations}, 16(1991), 1337-1365.
		
	\bibitem{Chen}	 Q. Chen, D. Wei, P. Zhang and Z. Zhang. Nonlinear inviscid damping for 2-D inhomogeneous incompressible Euler equations. \emph{arXiv:2303.14858}, to appear in JEMS.

		
		
		\bibitem{SHCHoi2011} S.-H. Choi, S.-Y. Ha and H. Lee. Dispersion estimates for the two-dimensional Vlasov–Yukawa system with small
		data. \emph{J. Differential Equations}, 250(2011), 515–550.
		
		\bibitem{FlyOuPau}P. Flynn, Z. Ouyang, B. Pausader and K. Widmayer. Scattering map for the Vlasov–Poisson system. \emph{Peking Math J},  
		6 (2023), 365–392. 	
		
		\bibitem{RTGlassey1996}
		R. T. Glassey. The Cauchy problem in kinetic theory.\emph{ Society for Industrial and Applied Mathematics,
			Philadelphia, PA, 1996.}
		
		\bibitem{NguyenTT2020}E. Grenier, T. T. Nguyen and I. Rodnianski.  Landau damping for analytic and Gevrey data. \emph{ Math. Res. Lett.},  28 (2021), 1679–1702.
		
		\bibitem{NguyenTT2020b}  E. Grenier, T. T. Nguyen and I. Rodnianski. Plasma echoes near stable Penrose data. \emph{SIAM J. Math. Anal.},  54 (2022).
		
		
		
		\bibitem{GriffinIacobelli2021a} M. Griffin-Pickering and M. Iacobelli. Global well-posedness for the Vlasov-Poisson system with
			massless electrons in the 3-dimensional torus. \textit{Comm. Partial Differential Equations}, 46 (2021), 1892 - 1939. 
			
		{\bibitem{GriffinIacobelli2021b}M. Griffin-Pickering and M. Iacobelli. Global strong solutions in $\mathbb{R}^3$
			for ionic Vlasov-Poisson systems. \textit{Kinet. Relat. Models}, 14 (2021), 571-597.}
			
		
	
		
		\bibitem{HanKwanD2021}D. Han-Kwan, T. T. Nguyen and F. Rousset. Asymptotic stability of equilibria for screened Vlasov–Poisson systems via pointwise dispersive estimates.\emph{  Ann. PDE},  7(2021),  Paper No. 18, 37 pp.
		
		\bibitem{HNRcmp}D. Han-Kwan, T. Nguyen and F. Rousset.  On the linearized Vlasov–Poisson system on the whole space around stable homogeneous equilibria. \emph{Commun. Math. Phys}, 387(2021), 1405-1440.
		
		\bibitem{EHorst1982} E. Horst. On the classical solutions of the initial value problem for the unmodified non-linear Vlasov equation (Parts I and II), \emph{Math. Meth. Appl. Sci. },  3(1981),  229-248 and 4(1982), 19-32.
		
		\bibitem{EHorst1993}
		E. Horst. On the asymptotic growth of the solutions of the Vlasov-Poisson system. \emph{Mathematical Methods in the Applied Sciences}, 16(1993), 75–86.
			
		\bibitem{HNX1} L. Huang, Q-H. Nguyen and Y. Xu. Sharp estimates for screened Vlasov-Poisson system around Penrose-stable equilibria in $\mathbb{R}^d $, $ d\geq3. $  \emph{arXiv:2205.10261.}
		
		\bibitem{HNX2} L. Huang, Q-H. Nguyen and Y. Xu. 
		Nonlinear Landau damping for the 2d Vlasov-Poisson system with massless electrons around Penrose-stable equilibrium. \emph{arXiv:2206.11744}.
		
		\bibitem{HJHwang2011}
		H.-J. Hwang, A. Rendall and J.-L. Velázquez. Optimal gradient estimates and asymptotic behaviour for the
		Vlasov-Poisson system with small initial data.\emph{ Arch. Ration. Mech. Anal.}, 200(2011), 313–360.
		
		\bibitem{ALonescu2020}	A. D. Ionescu and H. Jia. Inviscid damping near the Couette flow in a channel. \emph{Commun. Math. Phys}, 374(2020), 2015–2096.
		
\bibitem{Alonescu2020acta} A. D. Ionescu and H. Jia. Nonlinear inviscid damping near monotonic shear flows. \emph{Acta Math.}, 230(2023), 321–399. 


		\bibitem{Alonescwau2020} A. Ionescu, B. Pausader, X. Wang and K. Widmayer.  On the asymptotic behavior of solutions to the\\ Vlasov–Poisson system. \emph{ Int. Math. Res. Not.},  IMRN 2022,  8865–8889. 
				
		\bibitem{AIonescu2022}A. Ionescu, B. Pausader, X. Wang and K. Widmayer.  Nonlinear Landau damping for the Vlasov-Poisson system in $\mathbb{R}^ 3$: the Poisson equilibrium. \emph{Ann. PDE},  10 (2024),  Paper No. 2, 78 pp. 
		
		\bibitem{LionsPL1991}
		P. L. Lions and B. Perthame. Propagation of moments and regularity for the 3-dimensional Vlasov-Poisson system. \emph{Invent. Math.},  105(1991), 415-430.
		
		\bibitem{CMouhot2011}C. Mouhot and C. Villani. On Landau damping. \emph{Acta Math.}, 207(2011), 29–201.
		
		\bibitem{Naderweiren} N. Masmoudi and W. Zhao. Nonlinear inviscid damping for a class of monotone shear flows in finite channel. \emph{ Ann. of Math.},  
		199 (2024), 1093–1175.
		
		
	
	
		\bibitem{KPfaffelmoser1992}
		K. Pfaffelmoser. Global classical solutions of the Vlasov-Poisson system in three dimensions for general initial
		data.\emph{ J. Differential Equations}, 95(1992), 281–303.
		
		\bibitem{JSchaeffer1991} J. Schaeffer. Global existence of smooth solutions to the Vlasov-Poisson system in three dimensions.\emph{ Comm.
			Partial Differential Equations}, 16(1991), 1313–1335.
			
		\bibitem{JSmulevici2016}J. Smulevici. Small data solutions of the Vlasov-Poisson system and the vector field method.\emph{ Ann. PDE}, 2(2016), 
			Art. 11, 55 pp..
			
		\bibitem{Xwang2018} X. Wang. Decay estimates for the 3D relativistic and non-relativistic Vlasov-Poisson systems. \emph{ ArXiv:1805.10837.}
			
			
		\end{thebibliography}
	\end{document}